\documentclass{amsart}

\usepackage{url}
\usepackage{amsfonts}
\usepackage{amssymb}
\usepackage{amsmath}
\usepackage{amsthm}
\usepackage{mathrsfs}
\usepackage{enumitem}

\newcommand{\kf}{\mathbf{k}}
\newcommand{\QSh}{\mathcal{H}_{\mathrm{qsh}}}
\newcommand{\Sh}{\mathcal{H}_{\mathrm{sh}}}
\newcommand{\Lie}{\mathfrak{g}}
\newcommand{\Free}{\mathfrak{f}}
\newcommand{\GG}{\mathcal{G}}
\newcommand{\Seq}{\operatorname{Seq}}
\newcommand{\wt}{\operatorname{wt}}
\newcommand{\Gal}{\operatorname{Gal}_{\sigma}}
\newcommand{\Spec}{\operatorname{Spec}}
\newcommand{\Gm}{\mathbb{G}_m}

\theoremstyle{plain}
\newtheorem{prop}{Proposition}
\newtheorem{theorem}{Theorem}
\newtheorem{lemma}{Lemma}
\newtheorem{cor}{Corollary}

\theoremstyle{definition}

\newtheorem{example}{Example}

\theoremstyle{remark}
\newtheorem{rem}{Remark}

\begin{document}

\title[Harmonic sums and difference Galois groups]
{Harmonic sums and the Galois group of the Mellin--KZ difference equation}
\author{Nikita Markarian}

\begin{abstract}
We identify the universal Galois group of the difference equation obtained
by the Mellin transform of the Knizhnik--Zamolodchikov equation with the
prounipotent group associated with the transport Hopf algebra of Deligne
and Terasoma studied in \cite{Markarian}.
At each finite weight, we realize the Picard--Vessiot ring by finite
multiple harmonic sums.
We also interpret the classical Gamma-corrected projected associator
as the comparison between finite and regularized asymptotic fibers
in this realization.
\end{abstract}

\email{nikita.markarian@gmail.com}

\date{}

\address{UMR 7501, Universit\'e de Strasbourg,
7 rue Ren\'e Descartes,
67084 Strasbourg Cedex, France}

\maketitle

\section*{Introduction}

In 
\cite{Markarian}, we developed the approach of Deligne and Terasoma
\cite{DeligneTerasoma,Terasoma} to double shuffle relations.  Its
central object is the transport algebra $W$, which describes the
category of unipotent perverse sheaves on $\Gm$ smooth outside $1$,
modulo sheaves smooth on all of $\Gm$.  Multiplicative convolution
gives the tensor product, and vanishing cycles at $1$ give a fiber
functor.  Since the Mellin transform takes multiplicative convolution
to multiplication, it is natural to ask how this algebra is related
to the difference equation obtained from the KZ equation.

Consider the KZ equation
\begin{equation}
 \left(\partial_z-\frac{A}{z}-\frac{B}{1-z}\right)G(z)=0.
 \label{KZ-intro}
\end{equation}
The Mellin transform on $(0,1)$ gives
\begin{equation}
 F(s+1)=(s+1+A-B)^{-1}(s+A)F(s).
 \label{Mellin-KZ-intro}
\end{equation}
After a rational gauge transformation, the universal formal version
of this equation has an ordered-product solution whose coefficients
are finite multiple harmonic sums.  The completed graded dual of
their quasi-shuffle Hopf algebra \cite{Hoffman} is the harmonic
presentation of $W$.  This gives an explicit tensor realization of
the transport algebra by rational difference systems.

The Galois calculation is a short consequence of standard facts about
unipotent groups and rational difference equations.  It shows that
each finite-weight universal system has the full unipotent group
prescribed by the harmonic Hopf algebra (Theorem~\ref{main-theorem}).
Passing to the inverse limit gives its known free prounipotent group.
The same argument computes the groups of the Mellin--KZ systems with
simultaneously strictly upper-triangular residues
(Theorem~\ref{fixed-matrix-theorem}).

Universal difference systems with this harmonic Hopf algebra occur
in Joyner's work \cite{Joyner}; our conventions are compared in
Remark~\ref{rem:Joyner}.  The Galois argument also recovers the
positive-index case of Ablinger and Schneider's independence theorem
for harmonic sums \cite{AblingerSchneider}.  This concerns finite sums
as sequences and makes no assertion about their limiting multiple
zeta values.

Over $\mathbb C$, the regularized ordered product compares a finite
fiber at $1$ with a regularized asymptotic fiber at infinity, after a
shift of the independent variable.  Its identification with the
Gamma-corrected projected KZ associator is the classical
regularization formula of Costermans and Hoang Ngoc Minh
\cite[Theorem~6]{CostermansMinh}, related to the comparison of Ihara,
Kaneko and Zagier \cite[Section~2]{IKZ}.  Together with the reference
comparison supplied by the constant trivialization, this gives two
tensor isomorphisms between the fibers.  These fiber functors are
copies of the forgetful functor in the difference-equation model;
their identification with the geometric fiber functors on the
perverse-sheaf quotient remains to be established.

\medskip
\noindent\textit{Acknowledgements.}
We thank P.~Etingoff for fruitful discussions.

\subsection*{Notations}

Throughout the paper, $\kf$ is an algebraically closed field of
characteristic zero.  The base difference field is $\kf(s)$ with
$\sigma(s)=s+1$.

The ring $\Seq(\kf)$ is the ring of sequences with values in $\kf$, where two
sequences are identified if they differ at only finitely many places.  The
shift is
\[
 \sigma\langle a(0),a(1),a(2),\ldots\rangle
 =\langle a(1),a(2),a(3),\ldots\rangle.
\]
The field $\kf(s)$ embeds in $\Seq(\kf)$ by evaluation at sufficiently large
integers.

The weight of the letter $y_m$ and of the generator $e_m$ is $m$.  The
weight of a word is the sum of the weights of its letters.  Completions are
always taken with respect to the weight filtration.

\section{The quasi-shuffle group}
\label{sec:quasi-shuffle}

Let $Y=\{y_1,y_2,\ldots\}$ and let $\kf\langle Y\rangle$ be the vector space
spanned by words in the alphabet $Y$.  The quasi-shuffle product is defined
recursively by
\begin{equation}
 (y_au)*(y_bv)
 =y_a(u*(y_bv))+y_b((y_au)*v)+y_{a+b}(u*v),
 \label{quasi-shuffle}
\end{equation}
with the empty word as unit.  Together with the deconcatenation coproduct
\begin{equation}
 \Delta(w)=\sum_{uv=w}u\otimes v,
 \label{deconcatenation}
\end{equation}
this defines a connected graded commutative Hopf algebra, denoted by
$\QSh$.

Fix a total order on $Y$ and use the induced lexicographic order on words.
A nonempty word $w$ is a Lyndon word if it is strictly smaller than each of
its proper nontrivial suffixes.

\begin{theorem}[Hoffman \cite{Hoffman}]
The quasi-shuffle algebra $\QSh$ is a polynomial algebra on the Lyndon
words.  Moreover, there is a weight-preserving Hopf algebra isomorphism
\begin{equation}
 \exp_H\colon \Sh\longrightarrow\QSh
 \label{Hoffman-isomorphism}
\end{equation}
from the shuffle Hopf algebra on the same alphabet.
\end{theorem}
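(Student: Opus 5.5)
I will construct the isomorphism $\exp_H$ explicitly and then read off the polynomial structure from the classical result for the shuffle algebra (Radford's theorem: $\Sh$ is the polynomial algebra on Lyndon words in characteristic zero).

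\emph{Construction.} For a word $w=y_{a_1}\cdots y_{a_n}$ and a composition $I=(i_1,\ldots,i_k)$ of $n$, let $I[w]$ be the word of length $k$ whose $j$-th letter is $y_b$, where $b$ is the sum of the indices in the $j$-th block of $w$ cut according to $I$. Set
\[
 \exp_H(w)=\sum_{I\models n}\frac{1}{i_1!\cdots i_k!}\,I[w].
\]
This is weight-preserving by construction. It is invertible because it is unitriangular with respect to word length: the term with $I=(1,\ldots,1)$ is $w$, and all other terms are strictly shorter. The inverse is given by the analogous formula $\log_H$, with coefficients $(-1)^{n-k}/(i_1\cdots i_k)$.

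\emph{Compatibility with the coproduct.} A composition of $n$ restricts to compositions of the two factors of any deconcatenation $w=uv$ exactly when the cut point is a block boundary. Every term $I[w]$ arises this way, so $\Delta\circ\exp_H=(\exp_H\otimes\exp_H)\circ\Delta$ follows by direct bookkeeping. This step is routine.

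\emph{Compatibility with the product.} This is the main obstacle: one must show $\exp_H(u\sqcup\!\sqcup\, v)=\exp_H(u)*\exp_H(v)$. I would prove it by a generating-function argument rather than by induction on the recursion \eqref{quasi-shuffle}. Let the letters $y_a$ be commuting-indexed variables and pass to the dual picture. The graded dual of $\QSh$ is the completed free algebra $\kf\langle\langle Y\rangle\rangle$ with coproduct $\Delta_*(y_m)=\sum_{a+b=m}y_a\otimes y_b+y_m\otimes1+1\otimes y_m$. The graded dual of $\Sh$ is the same algebra with $y_m$ primitive.

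The dual map $\exp_H^*$ is the continuous algebra map sending $y_m$ to the weight-$m$ part of $\log(1+\sum_a y_a t^a)$ evaluated at $t=1$, suitably interpreted. Equivalently, the formal series $\exp\big(\sum_m z_m\big)=1+\sum_m y_m$ defines new generators $z_m$ with
\[
 z_m=\sum_{k}\frac{(-1)^{k-1}}{k}\sum_{a_1+\cdots+a_k=m}y_{a_1}\cdots y_{a_k},
\]
which is the $\log_H$-pattern with the appropriate coefficients. The key point is that $1+\sum y_m$ is grouplike for $\Delta_*$: by the formula above, $\Delta_*(1+\sum y_m)=(1+\sum y_m)\otimes(1+\sum y_m)$, provided the generating variable $t$ is inserted. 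Hence $\sum_m z_m t^m$ is primitive, and therefore each $z_m$ is primitive. So $y_m\mapsto z_m$ defines a Hopf isomorphism between the two dual algebras. Dualizing recovers $\exp_H$, with the factorials coming from the coefficients of the exponential, and this gives multiplicativity without any combinatorics of shuffles.

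\emph{Polynomial structure.} Since $\exp_H$ is an algebra isomorphism and $\Sh$ is free polynomial on Lyndon words, $\QSh$ is polynomial on the elements $\exp_H(\ell)$ for $\ell$ Lyndon. Each $\exp_H(\ell)$ equals $\ell$ plus terms of strictly lower length. A triangularity argument, by induction on length within each weight (which is finite-dimensional), shows that the Lyndon words themselves are also free polynomial generators. For this one checks that products of Lyndon words in the quasi-shuffle product have the same leading term as in the shuffle product, namely the concatenation in decreasing order plus lower terms, because the extra terms $y_{a+b}$ in \eqref{quasi-shuffle} are shorter.
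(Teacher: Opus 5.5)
The paper does not prove this statement. It quotes it from Hoffman, and the group-like series $X(t)$ and its logarithm appear only afterwards, where Hoffman's isomorphism is combined with them to obtain $\widehat{U(\Free)}\simeq\QSh^\vee$.

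Your proposal runs that logic in the opposite direction, and it is correct. Group-likeness of $X(t)$ for the coproduct dual to $*$ needs only that $\QSh$ is a bialgebra, which the paper takes as given. Hence the $L_m$ are primitive. Write $X'_m$ for the primitive generators of the completed dual $\Sh^\vee$. Then $\phi\colon X'_m\mapsto L_m$ is a weight-preserving isomorphism of complete Hopf algebras $\Sh^\vee\to\QSh^\vee$, since it is unitriangular in weight. Its graded transpose is the required Hopf isomorphism, and the only outside input is Radford's theorem. What this buys is a proof of multiplicativity with no shuffle combinatorics, at the price of passing through the completed duals.

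Two points need tightening.

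First, your directions are swapped. Writing $\sqcup$ for the shuffle product, the map $X'_m\mapsto L_m$ is the transpose of $\log_H$, not $\exp_H^*$. The transpose $\exp_H^*$ sends $X_m$ to $[t^m]\exp\bigl(\sum_a X'_at^a\bigr)$. To see that the explicitly defined $\exp_H$ (and not merely some isomorphism) is multiplicative, use your coproduct step:
\begin{itemize}
\item Because $\exp_H$ respects deconcatenation, $\exp_H^*$ is multiplicative for concatenation. So $\exp_H^*$ is determined by its values $\exp_H^*(X_m)$.
\item These values are read off from the coefficient of $y_m$ in $\exp_H(w)$. That coefficient is $1/n!$ for $w$ of length $n$, since only $I=(n)$ contributes.
\item Hence $\exp_H^*=\phi^{-1}$. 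Transposing the coproduct compatibility of $\phi^{-1}$ gives $\exp_H(u\sqcup v)=\exp_H(u)*\exp_H(v)$.
\end{itemize}

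Second, the detour through the generators $\exp_H(\ell)$ in your last paragraph is unnecessary. The length filtration is multiplicative for $*$, and its associated graded is $(\Sh,\sqcup)$, because $u*v-u\sqcup v$ consists of shorter words. So for Lyndon words $\ell_1\geq\cdots\geq\ell_k$, the monomial $\ell_1*\cdots*\ell_k$ has symbol the shuffle monomial $\ell_1\sqcup\cdots\sqcup\ell_k$. These shuffle monomials form a basis by Radford's theorem. Each weight space is finite-dimensional, so the quasi-shuffle monomials form a basis of $\QSh$. This is your triangularity argument made precise, and it needs only the statement of Radford's theorem.
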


For a word $w=a_1\cdots a_r$ and a composition
$I=(i_1,\ldots,i_k)$ of $r$, let $I[w]$ be the word obtained by replacing
each consecutive block of $i_j$ letters by their bracket, where
$[y_a,y_b]=y_{a+b}$.  Hoffman's isomorphism and its inverse are given by
\begin{align}
 \exp_H(w)
 &=\sum_{I\vDash r}\frac{1}{i_1!\cdots i_k!}I[w],
 \label{Hoffman-exp}\\
 \log_H(w)
 &=\sum_{I\vDash r}
 \frac{(-1)^{r-k}}{i_1\cdots i_k}I[w].
 \label{Hoffman-log}
\end{align}
For example,
\[
 \exp_H(y_ay_b)=y_ay_b+\frac12y_{a+b}.
\]

Let $X_m$ be dual to $y_m$.  The completed graded dual of $\QSh$ has
underlying algebra
\begin{equation}
 \QSh^\vee=\kf\langle\!\langle X_1,X_2,\ldots\rangle\!\rangle.
 \label{dual-algebra}
\end{equation}
Its coproduct is determined by
\begin{equation}
 \delta(X_m)=X_m\otimes1+1\otimes X_m
 +\sum_{a+b=m}X_a\otimes X_b.
 \label{dual-coproduct}
\end{equation}
Consequently, the series
\begin{equation}
 X(t)=1+\sum_{m\geq1}X_mt^m
 \label{group-like-X}
\end{equation}
is group-like:
\begin{equation}
 \delta(X(t))=X(t)\otimes X(t).
 \label{X-group-like}
\end{equation}

Define $L_m$ by
\begin{equation}
 L(t)=\log X(t)=\sum_{m\geq1}L_mt^m.
 \label{primitive-L}
\end{equation}
Since the logarithm of a group-like element is primitive, every $L_m$ is
primitive.  Explicitly,
\begin{equation}
 L_m=\sum_{r=1}^m\frac{(-1)^{r-1}}r
 \sum_{i_1+\cdots+i_r=m}X_{i_1}\cdots X_{i_r}.
 \label{Lm-formula}
\end{equation}
In small weights,
\begin{align}
 L_1&=X_1,\notag\\
 L_2&=X_2-\frac12X_1^2,\notag\\
 L_3&=X_3-\frac12(X_1X_2+X_2X_1)+\frac13X_1^3.
 \label{small-L}
\end{align}

Let $\Free=\Free(e_1,e_2,\ldots)$ be the free graded Lie algebra with
$\wt(e_m)=m$.  Formulae \eqref{Hoffman-isomorphism}--\eqref{primitive-L}
give an isomorphism of complete Hopf algebras
\begin{equation}
 \widehat{U(\Free)}\longrightarrow\QSh^\vee,
 \qquad e_m\longmapsto L_m.
 \label{free-dual-isomorphism}
\end{equation}
Thus, the affine group scheme represented by $\QSh$ is the free
prounipotent group
\begin{equation}
 \GG^{\mathrm{un}}=\exp\widehat{\Free}.
 \label{universal-group}
\end{equation}

For $N\geq1$, set
\begin{equation}
 \Free_{\leq N}=\Free\Big/\bigoplus_{q>N}\Free_q,
 \qquad
 \GG_N=\exp(\Free_{\leq N}).
 \label{finite-groups}
\end{equation}
The Lie algebra $\Free_{\leq N}$ is finite-dimensional and nilpotent.

\begin{prop}
Let $\QSh^{\leq N}$ be the subalgebra of $\QSh$ generated by the Lyndon words of
weight at most $N$.  It is a Hopf subalgebra and
\begin{equation}
 \QSh^{\leq N}=\mathcal{O}(\GG_N).
 \label{coordinate-GN}
\end{equation}
In particular, it is a polynomial algebra in finitely many variables.
\end{prop}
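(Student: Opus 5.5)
The plan is to show that both $\QSh^{\leq N}$ and $\mathcal{O}(\GG_N)$ are graded subspaces of $\QSh$ with the same Hilbert series, and that one contains the other. Throughout I identify $\QSh$ with $\mathcal{O}(\GG^{\mathrm{un}})$ via \eqref{free-dual-isomorphism}. I use that the pairing between $\QSh$ and $\QSh^\vee\cong\widehat{U(\Free)}$ is weight-preserving.

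First I describe $\mathcal{O}(\GG_N)$ inside $\QSh$. Let $J\subset\widehat{U(\Free)}$ be the closed two-sided ideal generated by $\bigoplus_{q>N}\Free_q$. Since $\bigoplus_{q>N}\Free_q$ is a Lie ideal consisting of primitive elements, $J$ is a Hopf ideal, and $\widehat{U(\Free)}/J=U(\Free_{\leq N})$. The functions on $\GG^{\mathrm{un}}$ that factor through $\GG^{\mathrm{un}}\to\GG_N$ form the annihilator $J^\perp\subset\QSh$. Because $J$ is an ideal, $J^\perp$ is a subcoalgebra; because $J$ is a coideal, $J^\perp$ is a subalgebra. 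Hence $J^\perp=\mathcal{O}(\GG_N)$ is a Hopf subalgebra.

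As a graded space, $J^\perp$ is the graded dual of $U(\Free_{\leq N})$. By PBW, its Hilbert series is that of $\operatorname{Sym}(\Free_{\leq N})$, namely
\[
 \prod_{q\leq N}(1-t^q)^{-\dim\Free_q}.
\]

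Next comes the inclusion $\QSh^{\leq N}\subset J^\perp$. Every element of $J$ is a (completed) sum of homogeneous pieces of weight $>N$, since $J$ is generated by elements of weight $>N$ and all generators $e_m$ have positive weight. The pairing is weight-preserving, so every element of $\QSh$ of weight $\leq N$, in particular every Lyndon word of weight $\leq N$, annihilates $J$. Since $J^\perp$ is a subalgebra, the subalgebra generated by these Lyndon words lies in $J^\perp$.

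For the Hilbert series of $\QSh^{\leq N}$, I use Hoffman's theorem: $\QSh$ is a polynomial algebra on the Lyndon words. So $\QSh^{\leq N}$ is the polynomial algebra on the Lyndon words of weight $\leq N$, and its Hilbert series is $\prod_{q\leq N}(1-t^q)^{-\ell_q}$, where $\ell_q$ is the number of Lyndon words of weight $q$. These generators are finite in number, because only the letters $y_1,\dots,y_N$ can occur in them. This also gives the final assertion of the statement.

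The step needing the most care is the count $\ell_q=\dim\Free_q$. I would get it from the Lyndon basis of the free Lie algebra, whose weight-$q$ elements are indexed by Lyndon words of weight $q$ in the letters $e_m$. Alternatively, one can compare $\prod_q(1-t^q)^{-\ell_q}$, the Hilbert series of $\QSh$, with $\prod_q(1-t^q)^{-\dim\Free_q}$, the Hilbert series of $\widehat{U(\Free)}$ by PBW. These agree via \eqref{free-dual-isomorphism}, and unique factorization of such products then forces $\ell_q=\dim\Free_q$.

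Once the Hilbert series agree, the two graded spaces have equal finite dimension in each weight. The inclusion is therefore an equality $\QSh^{\leq N}=J^\perp=\mathcal{O}(\GG_N)$, and in particular $\QSh^{\leq N}$ is a Hopf subalgebra.
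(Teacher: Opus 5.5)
Your argument is correct, but it takes a different route from the paper.

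The paper's proof has two steps:
\begin{enumerate}
\item It checks directly that $\QSh^{\leq N}$ is closed under deconcatenation. Proper prefixes and suffixes have smaller weight, so they are polynomials in Lyndon words of no greater weight.
\item It sets $H=\Spec(\QSh^{\leq N})$ and computes only its Lie algebra. The indecomposables of $\QSh^{\leq N}$ are spanned by the Lyndon words of weight at most $N$. Dually, $\widehat{\Free}\to\operatorname{Lie}(H)$ is an isomorphism in weights at most $N$, and its kernel is the part of weight greater than $N$. The correspondence between unipotent groups and nilpotent Lie algebras then gives $H\simeq\GG_N$.
\end{enumerate}

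You instead realize $\mathcal{O}(\GG_N)$ inside $\QSh$ as the annihilator $J^\perp$ of a Hopf ideal. You get $\QSh^{\leq N}\subset J^\perp$ from weights alone, and you close the gap by comparing Hilbert series.

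Your route has two advantages. The Hopf subalgebra property comes for free, and the equality holds literally as subspaces of $\QSh$, compatible with the projection $\GG^{\mathrm{un}}\to\GG_N$. The cost is a global count: PBW together with $\ell_q=\dim\Free_q$, obtained from the Lyndon basis or the Hilbert series comparison. You also need the standard duality identifying $\mathcal{O}(\GG_N)$ with the graded dual of $U(\Free_{\leq N})$.

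The paper's route uses only first-order data. There the count $\ell_q=\dim\Free_q$ is hidden in the identification of the indecomposables of $\QSh$ with the dual of $\widehat{\Free}$, and the price is invoking the Lie correspondence at the end.

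One small imprecision: $\widehat{U(\Free)}/J$ is the weight completion of $U(\Free_{\leq N})$, not $U(\Free_{\leq N})$ itself, which is infinite-dimensional. Since you only use its graded pieces, nothing in your argument changes.
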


\begin{proof}
The coproduct of a word is a sum of its prefixes tensored with its suffixes.
Every proper prefix and suffix has smaller weight.  By Hoffman's theorem,
each of them is a polynomial in Lyndon words of no greater weight.  Thus,
$\QSh^{\leq N}$ is a Hopf subalgebra.

Let $H=\Spec(\QSh^{\leq N})$.  The subalgebra $\QSh^{\leq N}$ contains
every homogeneous component of $\QSh$ of weight at most $N$, and its
augmentation ideal modulo its square has a basis given by the Lyndon
generators of those weights.  Dually, the induced map
$\widehat{\Free}\to\operatorname{Lie}(H)$ is an isomorphism in weights
at most $N$ and has zero target in higher weights.  Its kernel is
therefore precisely the terms of weight greater than $N$.  The
correspondence between unipotent groups and nilpotent Lie algebras in
characteristic zero identifies $H$ with $\GG_N$.
\end{proof}

\section{The universal difference system}
\label{sec:universal-system}

We recall the finite harmonic sums and their generating series
\cite{CostermansMinh}, in the conventions needed for the
Picard--Vessiot construction.

For a nonempty word $w=y_{m_1}\cdots y_{m_r}$, define the finite multiple harmonic
sum
\begin{equation}
 H_w(M)=H_{m_1,\ldots,m_r}(M)
 =\sum_{M\geq n_1>\cdots>n_r\geq1}
 \frac{1}{n_1^{m_1}\cdots n_r^{m_r}}.
 \label{harmonic-sum}
\end{equation}
For the empty word, or empty index tuple, we write
$H_{\varnothing}(M)=1$.  In particular, the depth-one sum
$H_1(M)=\sum_{n=1}^M1/n$ is distinct from this constant.

\begin{prop}
The map
\begin{equation}
 \QSh\longrightarrow\Seq(\kf),
 \qquad w\longmapsto\langle H_w(M)\rangle_{M\geq0},
 \label{harmonic-character}
\end{equation}
is an algebra homomorphism.
\end{prop}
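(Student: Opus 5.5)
We prove the stronger pointwise statement: for every fixed integer $M\geq0$ and all words $u,v$,
\begin{equation*}
 H_{u*v}(M)=H_u(M)\,H_v(M),
\end{equation*}
where $H$ is extended linearly to $\kf\langle Y\rangle$. This identity holds for every $M$, not just for $M$ large, so it holds in $\Seq(\kf)$ a fortiori. Linearity of the map is built into its definition, and it sends the empty word (the unit of $\QSh$) to the constant sequence $H_{\varnothing}=1$. So the whole content is multiplicativity on pairs of words.

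The key tool is a recursion in the first letter. For a nonempty word $y_a u$, separating the outermost summation index $n_1=n$ in \eqref{harmonic-sum} gives
\begin{equation*}
 H_{y_au}(M)=\sum_{n=1}^{M}\frac{1}{n^a}\,H_u(n-1).
\end{equation*}
This also holds when $u$ is empty, by the convention $H_\varnothing=1$. For $M=0$, every nonempty word gives $0$.

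We then argue by induction on $\ell(u)+\ell(v)$, the total length. If either word is empty, the claim is trivial. Otherwise write the words as $y_au$ and $y_bv$. The product $H_{y_au}(M)H_{y_bv}(M)$ is a double sum over $n,n'\in\{1,\dots,M\}$, which we split into the three regions $n>n'$, $n'>n$ and $n=n'$.

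In the region $n>n'$, fix $n$. The inner sum over $n'\leq n-1$ reassembles to $H_{y_bv}(n-1)$. The contribution is therefore $\sum_n n^{-a}H_u(n-1)H_{y_bv}(n-1)$. By the induction hypothesis applied at argument $n-1$, this equals $\sum_n n^{-a}H_{u*(y_bv)}(n-1)$, and by the recursion (extended linearly) this is $H_{y_a(u*(y_bv))}(M)$.

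The region $n'>n$ is symmetric and gives $H_{y_b((y_au)*v)}(M)$.

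In the diagonal region $n=n'$, the summand becomes $n^{-(a+b)}H_u(n-1)H_v(n-1)$. Induction turns this into $H_{y_{a+b}(u*v)}(M)$.

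Adding the three contributions reproduces exactly the recursion \eqref{quasi-shuffle}, which completes the induction.

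The only point needing care is that the induction hypothesis must hold at every argument $n-1<M$, not just at $M$. That is why we prove the identity for all $M$ simultaneously at each stage of the length induction. The rest is bookkeeping of the three regions, which correspond term by term to the three terms of \eqref{quasi-shuffle}. Commutativity and associativity of $*$ are not needed here.
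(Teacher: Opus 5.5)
Your proof is correct and follows the paper's route: split the product by the two outer summation indices into the regions $n>n'$, $n'>n$, $n=n'$, match them with the three terms of \eqref{quasi-shuffle}, and induct on the total depth. Your explicit first-letter recursion, and your point that the pointwise identity must be proved for all $M$ at once, spell out what the paper's two-line proof leaves implicit.
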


\begin{proof}
The product of two sums is divided into the regions in which the two outer
indices are unequal and the diagonal on which they are equal.  These three
terms give the three summands in \eqref{quasi-shuffle}.  Induction on the
sum of the depths proves the statement.
\end{proof}

For example,
\begin{equation}
 H_aH_b=H_{a,b}+H_{b,a}+H_{a+b}.
 \label{first-quasi-shuffle}
\end{equation}
The shift satisfies
\begin{equation}
 \sigma(H_{m_1,\ldots,m_r})
 =H_{m_1,\ldots,m_r}
 +\frac{1}{(s+1)^{m_1}}H_{m_2,\ldots,m_r}.
 \label{harmonic-shift}
\end{equation}

Let $x_N(t)$ be the image in $\GG_N$ of the group-like series $X(t)$ from
\eqref{group-like-X}.  In the primitive coordinates
\eqref{free-dual-isomorphism}, it is
\begin{equation}
 x_N(t)=\exp\left(\sum_{m=1}^Ne_mt^m\right).
 \label{xN}
\end{equation}
Choose a faithful finite-dimensional representation
\begin{equation}
 \rho_N\colon\GG_N\longrightarrow\operatorname{GL}(V_N).
 \label{faithful-representation}
\end{equation}
The universal difference system of weight $N$ is
\begin{equation}
 \sigma(Y)=\rho_N(a_N(s))Y,
 \qquad
 a_N(s)=x_N\left(\frac{1}{s+1}\right).
 \label{universal-system}
\end{equation}
The matrix entries of $\rho_N(a_N(s))$ belong to $\kf(s)$.  Indeed, the
exponential in \eqref{xN} becomes a finite sum in a unipotent
representation.

We equip $\kf(s)\otimes\mathcal O(\GG_N)$ with the difference structure
which shifts scalar coefficients by $s\mapsto s+1$ and acts on a
coordinate function $f\in\mathcal O(\GG_N)$ by
\[
 (\sigma f)(g)=f(a_N(s)g).
\]

For $M\geq1$, consider the ordered product
\begin{equation}
 \Phi_N(M)=x_N\left(\frac1M\right)
 x_N\left(\frac1{M-1}\right)\cdots x_N(1).
 \label{Phi-product}
\end{equation}
We set $\Phi_N(0)=1$.  The product satisfies
\begin{equation}
 \Phi_N(M+1)=x_N\left(\frac1{M+1}\right)\Phi_N(M).
 \label{Phi-equation}
\end{equation}
Thus, $\rho_N(\Phi_N)$ is a fundamental solution of
\eqref{universal-system} in the ring of matrix-valued sequences.

Write
\[
 \Phi(M)=X(1/M)\cdots X(1),\qquad \Phi(0)=1,
\]
for the untruncated product.  Its image in $\GG_N$ is $\Phi_N(M)$.

\begin{prop}
Before passing to a finite-weight quotient, the value of the coordinate word
$w=y_{m_1}\cdots y_{m_r}$ on the ordered product \eqref{Phi-product} is
\begin{equation}
 w(\Phi(M))=H_w(M).
 \label{coordinate-harmonic}
\end{equation}
Consequently, evaluation on $\Phi_N$ defines a difference ring morphism
\begin{equation}
 \theta_N\colon\kf(s)\otimes\QSh^{\leq N}\longrightarrow\Seq(\kf).
 \label{theta-N}
\end{equation}
\end{prop}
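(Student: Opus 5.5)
We prove \eqref{coordinate-harmonic} by induction on $M$ via the recursion \eqref{Phi-equation}, and then deduce the morphism \eqref{theta-N} from the Hopf structure.

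For the pairing we use two facts. First, words $w\in\QSh$ are coordinate functions on group-like elements of $\QSh^\vee=\kf\langle\!\langle X_1,X_2,\ldots\rangle\!\rangle$: a word $y_{m_1}\cdots y_{m_r}$ pairs with the coefficient of $X_{m_1}\cdots X_{m_r}$. Second, since the coproduct of $\QSh$ is deconcatenation, the product in the dual is concatenation, so
\[
 w(gh)=\sum_{uv=w}u(g)\,v(h).
\]
In the base case $M=0$ we have $\Phi(0)=1$. It pairs to $1$ with the empty word and to $0$ with nonempty words, which agrees with $H_\varnothing=1$ and $H_w(0)=0$ (the sum over $0\geq n_1>\cdots$ is empty).

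For the inductive step, write $\Phi(M+1)=X(1/(M+1))\Phi(M)$ and apply the concatenation formula. We need the values of prefixes $u$ on $X(t)$. By \eqref{group-like-X}, $X(t)$ contains no products of two or more $X$'s, so $u(X(t))$ is $1$ for $u$ empty, $t^{m}$ for $u=y_m$, and $0$ for longer $u$. Hence
\[
 w(\Phi(M+1))=w(\Phi(M))+\frac{1}{(M+1)^{m_1}}\,(y_{m_2}\cdots y_{m_r})(\Phi(M)).
\]
By induction the right side equals $H_w(M)+(M+1)^{-m_1}H_{m_2,\ldots,m_r}(M)$. This is $H_w(M+1)$: split the sum \eqref{harmonic-sum} for $H_w(M+1)$ according to whether $n_1\le M$ or $n_1=M+1$, which is the content of \eqref{harmonic-shift}. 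This proves \eqref{coordinate-harmonic}.

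For the consequence, the image of $\Phi(M)$ in $\GG_N$ is $\Phi_N(M)$, and $\QSh^{\leq N}=\mathcal O(\GG_N)$ by the earlier proposition. So evaluating $f\in\QSh^{\leq N}$ at $\Phi_N(M)$ gives the sequence $f(\Phi(M))$, and we extend $\kf(s)$-linearly, using the embedding $\kf(s)\subset\Seq(\kf)$. The map is multiplicative for one of two reasons: $\Phi_N(M)$ is a group element, i.e. a character of $\mathcal O(\GG_N)$; alternatively, by \eqref{coordinate-harmonic} it is the restriction of the algebra homomorphism \eqref{harmonic-character}.

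Compatibility with $\sigma$ is checked as follows:
\[
 \theta_N(\sigma f)(M)=f\bigl(a_N(M)\,\Phi_N(M)\bigr)=f\bigl(x_N(1/(M+1))\,\Phi_N(M)\bigr)=f(\Phi_N(M+1)),
\]
which is the shifted sequence. Here we use \eqref{Phi-equation}, the formula $a_N(s)=x_N(1/(s+1))$ evaluated at $s=M$, and the fact that scalars are shifted compatibly. Since $\Seq(\kf)$ identifies sequences differing at finitely many places, evaluating elements of $\kf(s)$ only for large $M$ causes no trouble.

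The main point requiring care is the dual pairing conventions. We must confirm that the coefficient of the word $X_{m_1}\cdots X_{m_r}$ is dual to $y_{m_1}\cdots y_{m_r}$ in the stated order. Deconcatenation must correspond to the left-to-right concatenation in which $X(1/(M+1))$ sits leftmost, so that the outermost index $n_1$ attaches to $m_1$. If the order were reversed, the recursion would peel off the last letter instead, and we would get $n_r$ largest, contradicting \eqref{harmonic-sum}. So this orientation must be checked against the conventions of \eqref{dual-algebra}.
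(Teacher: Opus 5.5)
Your proof is correct and is essentially the paper's argument. The paper expands the ordered product in one step, choosing $1$ or a single letter $X_m/n^m$ from each factor; you instead peel off the leftmost factor via \eqref{Phi-equation} and induct on $M$, which is the same combinatorics, and you handle multiplicativity and shift compatibility as the paper does. The orientation worry in your last paragraph is already resolved by the convention $(\alpha\beta)(w)=\sum_{uv=w}\alpha(u)\beta(v)$ that you used, since it pairs $y_{m_1}\cdots y_{m_r}$ with $X_{m_1}\cdots X_{m_r}$ in the same order.
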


\begin{proof}
In each factor $X(1/n)$ one may select either $1$ or one letter $X_m/n^m$.
To obtain the word $X_{m_1}\cdots X_{m_r}$ from the product with the larger
indices on the left, one must choose
$M\geq n_1>\cdots>n_r\geq1$.  Its coefficient is precisely
\eqref{harmonic-sum}.  Compatibility with the product follows also from
\eqref{harmonic-character}, and compatibility with the shift follows from
\eqref{Phi-equation}.
\end{proof}

\begin{rem}
\label{rem:Joyner}
In the present notation, Joyner's Hurwitz-polyzeta generating series
\cite[Section~1]{Joyner} satisfies
\[
 \mathcal H(z+1)=X(z^{-1})^{-1}\mathcal H(z).
\]
The continuous algebra homomorphism $\jmath$ determined by
\[
 \jmath(X_m)=[t^m]X(t)^{-1}
\]
is a weight-preserving involutive Hopf automorphism.  Indeed, in the free
primitive coordinates it is given by $\jmath(L_m)=-L_m$.
Thus, after setting $z=s+1$ and applying $\jmath$, Joyner's universal
coefficient becomes the coefficient of \eqref{universal-system} in
every finite-weight quotient.  Here we use the finite harmonic-sum
solution \eqref{Phi-product} to obtain a Picard--Vessiot realization
over $\kf(s)$.
Joyner also introduces a group of tensor automorphisms for his category
of unipotent difference connections \cite[Section~2]{Joyner}.
Theorem~\ref{main-theorem} below computes the Picard--Vessiot groups
of the finite-weight systems over the specified rational base field.
\end{rem}

\section{The Picard--Vessiot ring}
\label{sec:PV-ring}

We compute the Galois group by passing to its abelianization.  The
only rational summation obstruction needed is the following lemma.

\begin{lemma}
If $f\in\kf(s)$ and $c_1,\ldots,c_N\in\kf$ satisfy
\[
 f(s+1)-f(s)=\sum_{m=1}^N\frac{c_m}{(s+1)^m},
\]
then $c_1=\cdots=c_N=0$.
\label{rational-pole-obstruction}
\end{lemma}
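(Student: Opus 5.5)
The plan is to compare poles of both sides.

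Since $\kf$ is algebraically closed, write $f$ in partial fractions over $\kf$:
\[
 f(s)=P(s)+\sum_{\alpha\in\kf}\sum_{j\geq1}\frac{b_{\alpha,j}}{(s-\alpha)^j},
\]
with $P$ a polynomial and finitely many nonzero $b_{\alpha,j}$. Then $f(s+1)$ has the same expansion with each pole shifted from $\alpha$ to $\alpha-1$. The difference $f(s+1)-f(s)$ is therefore a polynomial $P(s+1)-P(s)$ plus principal parts. The right-hand side has zero polynomial part, so $P(s+1)=P(s)$, which forces $P$ to be constant in characteristic zero; this does not matter here anyway.

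Next, compare principal parts one pole class at a time. Partition the poles of $f$ into $\mathbb Z$-orbits $\alpha+\mathbb Z$. The operator $f\mapsto f(s+1)-f(s)$ preserves the subspace of principal parts supported on a single orbit, and the partial fraction decomposition is unique. Hence the identity holds orbit by orbit:
\begin{itemize}
\item on every orbit other than $-1+\mathbb Z$, the principal parts of $f(s+1)-f(s)$ vanish;
\item on the orbit $\mathbb Z$ (which contains $-1$), they equal $\sum_m c_m/(s+1)^m$.
\end{itemize}

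Now suppose some $c_m\neq0$, and let $g$ be the $\mathbb Z$-orbit part of $f$. It has finitely many poles in $\mathbb Z$, so take the largest pole $\beta$ of $g$ (if $g$ has no poles, $g(s+1)-g(s)=0$, a contradiction). The function $g(s+1)$ has a pole at $\beta-1$ but none at $\beta$. The function $g(s)$ has a pole at $\beta$, and nothing can cancel it, since every pole of $g(s+1)$ is at most $\beta-1$. So $g(s+1)-g(s)$ has a pole at $\beta$.

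Symmetrically, let $\gamma$ be the smallest pole of $g$. Then $g(s+1)$ has a pole at $\gamma-1$ that is not cancelled, because $g(s)$ has no pole below $\gamma$. So $g(s+1)-g(s)$ also has a pole at $\gamma-1$.

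This gives two distinct poles, $\beta$ and $\gamma-1$, since $\gamma-1<\gamma\leq\beta$. But the right-hand side has only the single pole $-1$, a contradiction. Therefore $g$ has no poles, and all $c_m=0$.

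The main point needing care is this extremal-pole argument. In particular, one must check that leading coefficients cannot cancel at the extreme poles. This holds because at $\beta$ only the term $-g(s)$ contributes, and at $\gamma-1$ only the term $g(s+1)$ contributes. The rest is uniqueness of partial fractions.
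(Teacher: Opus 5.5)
Your proof is correct, but its key step differs from the paper's. Both arguments reduce to principal parts on the orbit $-1+\mathbb Z$. At that point you use an extremal-pole (dispersion-type) count. The paper instead uses a linear invariant. If $a_{\alpha,m}$ is the coefficient of $(s-\alpha)^{-m}$ in $f$, the corresponding coefficient of $f(s+1)-f(s)$ is $a_{\alpha+1,m}-a_{\alpha,m}$. Its sum over any orbit $\alpha+\mathbb Z$ telescopes to zero, whereas for the right-hand side the sum over $-1+\mathbb Z$ is exactly $c_m$.

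What the paper's route buys:
\begin{itemize}
\item It is shorter and gives each $c_m=0$ directly, with no contradiction or case distinction.
\item It does not use the order on $\mathbb Z\subset\kf$, so it would work verbatim even for the finite cyclic orbits of positive characteristic.
\item These orbit sums are the classical complete obstruction to rational summability. The same argument therefore also rules out right-hand sides with several poles in one orbit, such as $1/(s+1)+1/(s+5)$, which your two-pole count cannot detect.
\end{itemize}

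What your route buys is a different qualitative fact. A nonzero difference $g(s+1)-g(s)$ with $g$ supported on one orbit always has uncancelled poles at both $\beta$ and $\gamma-1$, regardless of the shape of the principal parts. It uses characteristic zero through the ordering of the orbit.

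A minor point: the assumption that some $c_m\neq 0$ is only needed in the pole-free case. The two-pole count by itself already shows that $g$ has no poles, so the right-hand side vanishes.
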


\begin{proof}
For $\alpha\in\kf$ and $m\geq1$, let $a_{\alpha,m}$ be the coefficient
of $(s-\alpha)^{-m}$ in the principal part of $f$ at $\alpha$, with
$a_{\alpha,m}=0$ when this term is absent.  Only finitely many of these
coefficients are nonzero.  The corresponding coefficient of
$f(s+1)-f(s)$ is $a_{\alpha+1,m}-a_{\alpha,m}$.  Its sum over any
translation orbit $\alpha+\mathbb Z$ is therefore zero.  For the right
hand side of the displayed equation, the sum over $-1+\mathbb Z$ is
$c_m$.  Hence every $c_m$ vanishes.
\end{proof}

\begin{lemma}
Let $G$ be a unipotent algebraic group over $\kf$.  A closed subgroup
$H\subseteq G$ which maps surjectively onto $G^{\mathrm{ab}}$ is equal
to $G$.  Consequently, every proper closed subgroup of $G$ is contained
in the kernel of a nonzero homomorphism $G\to\mathbb G_a$.
\label{unipotent-abelianization}
\end{lemma}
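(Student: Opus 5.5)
The plan is to pass to Lie algebras and use the Frattini-type property of nilpotent Lie algebras. In characteristic zero, $\exp$ and $\log$ give an equivalence between unipotent algebraic groups over $\kf$ and finite-dimensional nilpotent Lie algebras. Under this equivalence, closed subgroups correspond to Lie subalgebras. Let $\mathfrak g=\operatorname{Lie}(G)$ and $\mathfrak h=\operatorname{Lie}(H)$. For unipotent $G$ the commutator subgroup $[G,G]$ is closed, since commutator subgroups of connected groups are closed. Its Lie algebra is $[\mathfrak g,\mathfrak g]$, so $G^{\mathrm{ab}}=G/[G,G]$ has Lie algebra $\mathfrak g/[\mathfrak g,\mathfrak g]$. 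Since the Lie functor is exact on unipotent groups, the hypothesis that $H\to G^{\mathrm{ab}}$ is surjective translates into
\[
 \mathfrak h+[\mathfrak g,\mathfrak g]=\mathfrak g .
\]

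The key step is to show that this forces $\mathfrak h=\mathfrak g$. I will use the lower central series $\mathfrak g^1=\mathfrak g$, $\mathfrak g^{k+1}=[\mathfrak g,\mathfrak g^k]$, which reaches $0$ by nilpotency. I prove by induction on $k$ that $\mathfrak h+\mathfrak g^{k}=\mathfrak g$. The case $k=2$ is the hypothesis. For the inductive step, the bracket $\mathfrak g\times\mathfrak g^{k}\to\mathfrak g^{k+1}$, taken modulo $\mathfrak g^{k+2}$, factors through $\mathfrak g/\mathfrak g^2\otimes\mathfrak g^k/\mathfrak g^{k+1}$. Substituting $\mathfrak g=\mathfrak h+\mathfrak g^2$ and expanding $\mathfrak g^k$ by iterated brackets, one gets
\[
 \mathfrak g^{k}\subseteq [\mathfrak h,[\mathfrak h,\ldots,\mathfrak h]]+\mathfrak g^{k+1}.
\]
The first term lies in $\mathfrak h$ because $\mathfrak h$ is a subalgebra. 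Hence $\mathfrak h+\mathfrak g^{k+1}=\mathfrak h+\mathfrak g^k=\mathfrak g$. Taking $k$ large enough that $\mathfrak g^{k}=0$ gives $\mathfrak h=\mathfrak g$. Since $G$ is connected, being unipotent in characteristic zero, it follows that $H=G$.

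For the consequence, let $H\subsetneq G$ be a proper closed subgroup. Then the image of $H$ in $G^{\mathrm{ab}}$ is a proper closed subgroup. That image is closed because images of algebraic group homomorphisms are closed. The group $G^{\mathrm{ab}}$ is commutative unipotent in characteristic zero, so it is a vector group $\mathbb G_a^n$. Its closed subgroups are linear subspaces, because they correspond to Lie subalgebras, which here are just subspaces. A proper subspace is contained in the kernel of a nonzero linear functional $\mathbb G_a^n\to\mathbb G_a$. Composing with $G\to G^{\mathrm{ab}}$ gives the required nonzero homomorphism $G\to\mathbb G_a$ whose kernel contains $H$.

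I expect the main obstacle to be the inductive step, specifically the inclusion of $\mathfrak g^k$ in brackets of elements of $\mathfrak h$ modulo $\mathfrak g^{k+1}$. The cleanest route is to show that $\mathfrak g^k$ is spanned modulo $\mathfrak g^{k+1}$ by $k$-fold brackets of elements of $\mathfrak g$. Each entry can be replaced by an element of $\mathfrak h$, since the error lies in $\mathfrak g^2$, and an error in $\mathfrak g^2$ pushes the bracket into $\mathfrak g^{k+1}$. The remaining points are routine: closedness of $[G,G]$, and the identification of $\operatorname{Lie}([G,G])$ with $[\mathfrak g,\mathfrak g]$. Both follow from the $\exp$/$\log$ equivalence, since $\exp([\mathfrak g,\mathfrak g])$ is a normal closed subgroup with abelian quotient.
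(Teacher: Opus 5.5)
Your proposal is correct and follows the paper's proof. In both, you pass to Lie algebras to get $\mathfrak g=\mathfrak h+[\mathfrak g,\mathfrak g]$, show $\mathfrak g=\mathfrak h+\gamma_r\mathfrak g$ by induction along the lower central series, and conclude by nilpotence and connectedness; the consequence then comes from a linear functional on the vector group $G^{\mathrm{ab}}$.

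The only difference is the inductive step. The paper uses the one-line containment $\mathfrak g=\mathfrak h+[\mathfrak g,\mathfrak g]\subseteq\mathfrak h+[\mathfrak h+\gamma_r\mathfrak g,\mathfrak h+\gamma_r\mathfrak g]\subseteq\mathfrak h+\gamma_{r+1}\mathfrak g$. This avoids your detour through $k$-fold brackets and the filtration property $[\mathfrak g^i,\mathfrak g^j]\subseteq\mathfrak g^{i+j}$.
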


\begin{proof}
Put $\mathfrak g=\operatorname{Lie}(G)$ and
$\mathfrak h=\operatorname{Lie}(H)$, and write
$\gamma_1\mathfrak g=\mathfrak g$,
$\gamma_{r+1}\mathfrak g=[\mathfrak g,\gamma_r\mathfrak g]$.
Surjectivity onto the abelianization gives
$\mathfrak g=\mathfrak h+\gamma_2\mathfrak g$.
If $\mathfrak g=\mathfrak h+\gamma_r\mathfrak g$ for $r\geq2$, then
\[
 \mathfrak g=\mathfrak h+[\mathfrak g,\mathfrak g]
 \subseteq\mathfrak h+
 [\mathfrak h+\gamma_r\mathfrak g,\mathfrak h+\gamma_r\mathfrak g]
 \subseteq\mathfrak h+\gamma_{r+1}\mathfrak g.
\]
Nilpotence implies $\mathfrak h=\mathfrak g$.  Since $G$ is connected
in characteristic zero, this gives $H=G$.
If $H$ is proper, its image in $G^{\mathrm{ab}}$ is thus a proper
linear subspace of an additive vector group.  A nonzero linear
functional vanishing on that image gives the required homomorphism.
\end{proof}

Put
\begin{equation}
 S_N=\kf(s)\otimes\mathcal O(\GG_N),
 \qquad R_N=\theta_N(S_N)\subset\Seq(\kf),
 \label{RN}
\end{equation}
with the difference structure and evaluation morphism of
Section~\ref{sec:universal-system}.

\begin{theorem}
The morphism $\theta_N$ is injective, and $R_N$ is a Picard--Vessiot
ring for the universal system \eqref{universal-system}.  Its difference
Galois group is
\begin{equation}
 \Gal(R_N/\kf(s))=\rho_N(\GG_N).
 \label{universal-Galois-theorem}
\end{equation}
\label{main-theorem}
\end{theorem}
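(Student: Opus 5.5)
The plan is to use the standard torsor description of Picard--Vessiot rings for systems with coefficients in an algebraic group. Because $a_N(s)\in\GG_N(\kf(s))$, the Galois group of \eqref{universal-system} is a closed subgroup of $\rho_N(\GG_N)$. More precisely, the difference ring $S_N=\kf(s)\otimes\mathcal O(\GG_N)$, with $\sigma f(g)=f(a_N(s)g)$, is the coordinate ring of the trivial $\GG_N$-torsor; the point $g$ of $\GG_N$ plays the role of the fundamental solution. Every Picard--Vessiot ring is then a quotient $S_N/\mathfrak m$ by a maximal difference ideal. Such a quotient is the coordinate ring of a $\kf(s)$-form of a torsor under a closed subgroup $H\subseteq\GG_N$, and that subgroup $H$ is the Galois group. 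This is the classical structure theorem of van der Put--Singer for difference equations over $\kf(s)$ with $\kf$ algebraically closed. On this basis I would prove two things: (i) $S_N$ is already difference-simple, i.e.\ the zero ideal is maximal; and (ii) the evaluation $\theta_N$ is injective.

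Step (ii) follows from (i). The image $R_N\subset\Seq(\kf)$ is a difference ring, and $\ker\theta_N$ is a difference ideal of $S_N$. Moreover $\theta_N(1)=1\neq 0$ in $\Seq(\kf)$, so the kernel is proper. Simplicity of $S_N$ then forces the kernel to vanish. Consequently $R_N\cong S_N$ is difference-simple and is generated over $\kf(s)$ by the entries of $\rho_N(\Phi_N)$ and the inverse of its determinant. Since $\rho_N$ is a faithful representation, these entries generate $\mathcal O(\GG_N)$ up to the determinant; and since the group is unipotent, the determinant is $1$. The ring $R_N$ also sits inside $\Seq(\kf)$, where the constants are $\kf$. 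So $R_N$ is a Picard--Vessiot ring, and its group is the full $\rho_N(\GG_N)$, because $S_N$ is the coordinate ring of the trivial torsor under all of $\GG_N$.

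The main work is (i), and I would run it through Lemma~\ref{unipotent-abelianization}. Suppose $S_N$ is not simple. Then some maximal difference ideal gives a Picard--Vessiot ring whose Galois group corresponds to a proper closed subgroup $H\subsetneq\GG_N$, together with a gauge element $h\in\GG_N(\kf(s))$ such that $\sigma(h)^{-1}a_N h\in H(\kf(s))$. This reduction form of the structure theorem holds over $\kf(s)$, whose $C^1$-type property makes torsors under connected groups trivial; for unipotent groups the triviality is elementary anyway. By Lemma~\ref{unipotent-abelianization} there is a nonzero homomorphism $\chi\colon\GG_N\to\mathbb G_a$ that vanishes on $H$. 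Its differential vanishes on $[\Free_{\leq N},\Free_{\leq N}]$, so $\chi$ is the linear functional $\chi(\exp u)=\sum_m c_m\,(\text{coefficient of }e_m\text{ in }u)$, with not all $c_m=0$. Applying $\chi$ to the gauge relation gives $\chi(a_N)=\sum_{m\le N}c_m/(s+1)^m$ by \eqref{xN}, and yields $f(s+1)-f(s)=\sum_m c_m/(s+1)^m$ with $f=\chi(h)\in\kf(s)$. Lemma~\ref{rational-pole-obstruction} forces all $c_m=0$, which is a contradiction. So $H=\GG_N$ and $S_N$ is simple.

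The expected obstacle is justifying the reduction step cleanly: that a non-simple $S_N$ yields a proper subgroup $H$ and a rational gauge $h$ taking $a_N$ into $H(\kf(s))$. I would cite van der Put--Singer, namely the result that the Galois group is the smallest closed subgroup into which the system can be gauged over $\kf(s)$, together with the triviality of torsors under connected unipotent groups over $\kf(s)$. A fallback is to bypass it by working with $\mathbb G_a$-quotients directly. Pushing the system forward along $\chi$ gives the equation $\sigma y=y+\sum c_m/(s+1)^m$, whose solution $\sum c_mH_m$ would have to be algebraic, hence rational, over $\kf(s)$ if the group were proper. Lemma~\ref{rational-pole-obstruction} again excludes this.
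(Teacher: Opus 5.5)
Your proposal is correct and is essentially the paper's proof. A proper Galois group $H\subsetneq\GG_N$ gives, by Lemma~\ref{unipotent-abelianization}, a nonzero character $\chi$ vanishing on $H$ and a rational $f$ with $\sigma(f)-f=\chi(a_N(s))$, contradicting Lemma~\ref{rational-pole-obstruction}. Simplicity of $S_N$ then yields injectivity of $\theta_N$ and the full group $\rho_N(\GG_N)$.

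There are two small differences. You obtain $f=\chi(h)$ from a rational point $h$ of the $H$-torsor, which exists because $H$ is unipotent in characteristic zero. The paper instead takes $u=\chi(Z)$, which is the same element, and uses the fixed-ring theorem, so it needs no torsor-triviality input. The paper also makes explicit, via $\dim R=\dim\GG_N=\dim S_N$, the step you leave implicit: a nonzero maximal $\sigma$-ideal forces $H\neq\GG_N$.
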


\begin{proof}
Write $K=\kf(s)$ and $G=\GG_N$, identified with its image under the
faithful representation $\rho_N$.  Choose a maximal proper
$\sigma$-ideal $J\subset S_N$ and put $R=S_N/J$.  The images of the
matrix coordinate functions form a fundamental solution $Z\in G(R)$,
and they generate $R$ over $K$.  Thus $R$ is a Picard--Vessiot ring by
the usual construction, and its constants are $\kf$.  This construction
can be carried out with the defining equations of $G$ imposed from the
start, because left multiplication by $a_N(s)\in G(K)$ preserves them;
see \cite[Proposition~4.13 and the proof of Corollary~5.8(2)]{SingerNotes}
and \cite[Section~1.1]{vanDerPutSinger}.

Let $H$ be the Picard--Vessiot group of $R/K$.  Every automorphism has
the form $Z\mapsto Zc$ with $c\in\operatorname{GL}(V_N)(\kf)$.
Since both matrices belong to $G(R)$, their quotient belongs to
$G(\kf)$.  Hence $H$ is a closed subgroup of $G$.

Suppose that $H\ne G$.  By Lemma~\ref{unipotent-abelianization}, there
is a nonzero additive character $\chi\colon G\to\mathbb G_a$ vanishing
on $H$.  The element $u=\chi(Z)$ is fixed by every Picard--Vessiot
automorphism, so $u\in K$ by the fixed-ring theorem
\cite[Corollary~5.15]{SingerNotes}.  On the other hand,
\begin{equation}
 \sigma(u)-u=\chi(a_N(s))
 =\sum_{m=1}^N\frac{c_m}{(s+1)^m},
 \qquad c_m=d\chi(e_m).
 \label{abelianized-obstruction}
\end{equation}
Here $d\chi\colon\operatorname{Lie}(G)\to\kf$ is the differential
of $\chi$ at the identity.  We use the exponential description \eqref{xN}; on the additive
group the exponential is the identity.  The classes of
$e_1,\ldots,e_N$ form a basis of the abelianization of
$\Free_{\leq N}$, so the $c_m$ are not all zero.  This contradicts
Lemma~\ref{rational-pole-obstruction}.  We conclude that $H=G$.

The Picard--Vessiot torsor theorem now gives
\[
 \dim R=\dim G=\dim S_N;
\]
see \cite[Theorem~5.5]{SingerNotes}.  By
\eqref{coordinate-GN}, $S_N$ is a polynomial ring over $K$.
A quotient by a nonzero ideal has smaller Krull dimension, hence
$J=0$.  Thus $S_N$ itself is $\sigma$-simple.  The unital difference
morphism $\theta_N$ has a proper $\sigma$-ideal as its kernel, so this
kernel is zero.

Finally, $Y_N=\rho_N(\Phi_N)$ is a fundamental solution in sequences.
Since $\rho_N$ is a closed embedding, its matrix coordinates generate
$\mathcal O(\GG_N)$, and therefore
\begin{equation}
 R_N=\kf(s)[Y_N,\det(Y_N)^{-1}]
 \simeq\kf(s)\otimes\mathcal O(\GG_N).
 \label{PV-coordinate-ring}
\end{equation}
The determinant is $1$, but it is kept in the formula to match the
standard definition.  The isomorphism identifies $R_N$ with the
Picard--Vessiot ring just constructed.  Its Galois action is right
translation:
\begin{equation}
 Y_N\longmapsto Y_N\rho_N(c),\qquad c\in\GG_N(\kf).
 \label{right-action}
\end{equation}
This proves the theorem.
\end{proof}

\begin{cor}
Let $\pi\colon\GG_N\to H$ be a surjective morphism of unipotent algebraic
groups and let $H\subset\operatorname{GL}(V)$ be faithful.  The system
\begin{equation}
 \sigma(Y)=\pi(a_N(s))Y
 \label{quotient-system}
\end{equation}
has difference Galois group $H$.
\label{quotient-corollary}
\end{cor}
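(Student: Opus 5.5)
Two arguments are available; I would give the direct one and mention the functorial one as a cross-check. The direct argument repeats the proof of Theorem~\ref{main-theorem} with $G$ replaced by $H$. Write $K=\kf(s)$ and $b(s)=\pi(a_N(s))\in H(K)$; in a unipotent representation its entries are rational. Take a maximal $\sigma$-ideal $J$ of $K\otimes\mathcal O(H)$, with $\sigma f(g)=f(b(s)g)$. This gives a Picard--Vessiot ring $R=(K\otimes\mathcal O(H))/J$ with fundamental solution $Z\in H(R)$. As before, the Galois group $H'$ is a closed subgroup of $H$.

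Suppose $H'\neq H$. Lemma~\ref{unipotent-abelianization} gives a nonzero additive character $\chi\colon H\to\mathbb G_a$ vanishing on $H'$. Then $u=\chi(Z)$ is Galois-invariant, so $u\in K$. On the other hand,
\[
 \sigma(u)-u=\chi(b(s))=(\chi\circ\pi)(a_N(s))=\sum_{m=1}^N\frac{c_m}{(s+1)^m},
 \qquad c_m=d(\chi\circ\pi)(e_m).
\]
Since $\pi$ is surjective, $\chi\circ\pi$ is a nonzero additive character of $\GG_N$. Its differential is nonzero on $\operatorname{Lie}(\GG_N)$ and kills the commutator ideal, so it is nonzero on some $e_m$, because the classes of the $e_m$ span the abelianization. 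This contradicts Lemma~\ref{rational-pole-obstruction}, and hence $H'=H$.

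The functorial cross-check reads the quotient system off from Theorem~\ref{main-theorem}. The element $\pi(\Phi_N)$ is a fundamental solution of \eqref{quotient-system} in sequences. Its coordinates generate the image of $K\otimes\mathcal O(H)\hookrightarrow K\otimes\mathcal O(\GG_N)$ under $\theta_N$, which is $\sigma$-stable. Because $\theta_N$ is injective, this image is $K\otimes\mathcal O(H)$. Its $\sigma$-simplicity follows from the dimension argument in the theorem, once the maximal ideal is shown to be zero.

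The main obstacle is surjectivity of the differential. The point to check is that a nonzero character of $H$ pulls back to a nonzero character of $\GG_N$. This is clear, since $\pi$ is surjective on $\kf$-points ($\kf$ algebraically closed) and hence $\chi\circ\pi\neq0$. A nonzero additive character of a unipotent group in characteristic zero has nonzero differential, because a homomorphism of connected groups with zero differential is trivial. Everything else is verbatim from the proof of Theorem~\ref{main-theorem}.
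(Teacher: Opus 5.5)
Your direct argument is correct, but it takes a different route from the paper's proof of this corollary. The paper does not rerun the abelianization argument on $H$. Instead it deduces the corollary from Theorem~\ref{main-theorem}:
\begin{itemize}
\item it views $\mathcal O(H)\subset\mathcal O(\GG_N)$ via $\pi^*$ and sets $S=\theta_N(\kf(s)\otimes\mathcal O(H))\subset R_N$;
\item it uses injectivity of $\theta_N$ to identify $S$ with $\kf(s)\otimes\mathcal O(H)$;
\item it descends $\sigma$-simplicity from $R_N$ to $S$ by faithful flatness, coming from $\GG_N\simeq\ker(\pi)\times H$ over $H$: a nonzero $\sigma$-ideal $I$ of $S$ gives $IR_N=R_N$, hence $I=S$;
\item it takes $\pi(\Phi_N)$ as a fundamental solution generating $S$ and reads off the group by right translation.
\end{itemize}

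You instead treat Theorem~\ref{main-theorem} as the case $\pi=\mathrm{id}$ of one argument. The only new input is that $\chi\circ\pi$ is a nonzero character of $\GG_N$, whose differential must then be nonzero on some $e_m$, so Lemma~\ref{rational-pole-obstruction} applies unchanged. This is self-contained and uses neither injectivity of $\theta_N$ nor faithful flatness. It also applies directly to any unipotent system whose logarithmic coefficients generate $\operatorname{Lie}(H)$, without passing through $\GG_N$.

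What the paper's route gives at no extra cost is the concrete Picard--Vessiot ring: $\kf(s)\otimes\mathcal O(H)$ is itself $\sigma$-simple and sits in $\Seq(\kf)$ via harmonic sums. Your route recovers this if, after obtaining $H'=H$, you add the torsor dimension count. That forces $J=0$, and then evaluation on $\pi(\Phi_N)$ is injective because its kernel is a proper $\sigma$-ideal.

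Your ``functorial cross-check'' is not independent as written. The dimension argument can only be run once the group is known to be $H$, so it leans on your direct argument. What would make that route stand on its own is exactly the faithful-flatness descent of $\sigma$-simplicity from $R_N$ to $S$ that the paper uses.
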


\begin{proof}
Use $\pi^*$ to view $\mathcal O(H)$ as a subalgebra of
$\mathcal O(\GG_N)$ and put
\[
 S=\theta_N(\kf(s)\otimes\mathcal O(H))\subset R_N.
\]
The injectivity of $\theta_N$ identifies $S$ with
$\kf(s)\otimes\mathcal O(H)$.  The surjection $\pi$ is faithfully flat,
so $R_N$ is faithfully flat over $S$.  In this unipotent setting this
also follows by choosing a linear section of the induced map of Lie
algebras: exponential and logarithm give a regular section of $\pi$ as
a morphism of varieties, and hence an isomorphism
$\GG_N\simeq\ker(\pi)\times H$ over $H$.

If $I$ is a nonzero $\sigma$-ideal of $S$, then $IR_N$ is a nonzero
$\sigma$-ideal of $R_N$, so $IR_N=R_N$.  Faithful flatness gives
$I=(IR_N)\cap S=S$.  Thus $S$ is $\sigma$-simple, and its constants
are $\kf$ because it is a difference subring of $R_N$ containing $\kf$.
The matrix $\pi(\Phi_N)$ is a fundamental solution of
\eqref{quotient-system}.  Its entries generate $S$, since the faithful
representation of $H$ is a closed embedding.  Hence $S$ is a
Picard--Vessiot ring.  Right translation identifies its Galois group
with $H$, as in the proof of Theorem~\ref{main-theorem}.
\end{proof}

The compatible finite-weight groups form an inverse system.  Thus,
Theorem~\ref{main-theorem} gives the following precise meaning to the
universal group.

\begin{cor}
The pro-Picard--Vessiot group of the inverse system
\eqref{universal-system} is
\begin{equation}
 \varprojlim_N\GG_N
 =\exp\widehat{\Free}(e_1,e_2,\ldots).
 \label{pro-PV-group}
\end{equation}
\end{cor}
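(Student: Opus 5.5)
The plan is to assemble the finite-weight results of Theorem~\ref{main-theorem} into an inverse system and pass to the limit. First I will check that the transition maps are compatible. For $N'\geq N$, the projection $\Free_{\leq N'}\to\Free_{\leq N}$ kills weights in $(N,N']$ and induces a surjection $\pi_{N',N}\colon\GG_{N'}\to\GG_N$. By \eqref{xN} it sends $x_{N'}(t)$ to $x_N(t)$, hence $a_{N'}(s)$ to $a_N(s)$ and $\Phi_{N'}$ to $\Phi_N$. Dually, the inclusion $\QSh^{\leq N}\subset\QSh^{\leq N'}$ is $\pi_{N',N}^*$, and the evaluation morphisms $\theta_N$ of \eqref{theta-N} are restrictions of one another. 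The images therefore satisfy $R_N\subset R_{N'}\subset\Seq(\kf)$, so the Picard--Vessiot rings form a directed system of difference subrings of sequences.

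Next I will identify the restriction maps on Galois groups. By Corollary~\ref{quotient-corollary} applied to $\pi_{N',N}$, the ring $R_N$ sits inside $R_{N'}$ as the Picard--Vessiot ring of the quotient system. Restriction of automorphisms gives the surjection $\Gal(R_{N'}/\kf(s))\to\Gal(R_N/\kf(s))$. Under \eqref{right-action} this restriction is $\pi_{N',N}$, because right translation by $c\in\GG_{N'}(\kf)$ on $\Phi_{N'}$ projects to right translation by $\pi_{N',N}(c)$ on $\Phi_N$. This holds independently of the choices of $\rho_N$, since the groups are intrinsically $\operatorname{Spec}$ of the $\sigma$-constant coinvariants, namely $\mathcal O(\GG_N)$.

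The pro-Picard--Vessiot ring is then $R_\infty=\bigcup_N R_N=\theta(\kf(s)\otimes\QSh)$, which uses that $\QSh=\bigcup_N\QSh^{\leq N}$ by Hoffman's theorem. Its group is the inverse limit of the $\Gal(R_N/\kf(s))=\GG_N$ along the $\pi_{N',N}$. Finally I identify $\varprojlim_N\GG_N$ with $\exp\widehat{\Free}$. At the level of Lie algebras, $\varprojlim_N\Free_{\leq N}$ is the weight completion $\widehat{\Free}$, since each $\Free_q$ is finite-dimensional. The exponential correspondence for nilpotent Lie algebras is functorial, so it commutes with the limit. Equivalently, $\mathcal O(\varprojlim\GG_N)=\varinjlim\QSh^{\leq N}=\QSh$, which is \eqref{universal-group} via \eqref{free-dual-isomorphism}.

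The main obstacle is the bookkeeping in the second step: showing that restriction of automorphisms really coincides with $\pi_{N',N}$, independently of the chosen faithful representations. I would handle it by working with the coordinate rings $\kf(s)\otimes\mathcal O(\GG_N)$ and their torsor structure, rather than with matrices.
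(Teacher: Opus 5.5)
Your proposal is correct and follows the paper's route. The paper only notes that the finite-weight groups $\GG_N$ from Theorem~\ref{main-theorem} form a compatible inverse system, passes to the limit, and identifies the result with $\exp\widehat{\Free}$ via \eqref{universal-group}. Your use of Corollary~\ref{quotient-corollary} to embed $R_N\subset R_{N'}$, and to identify restriction of automorphisms with $\pi_{N',N}$, spells out the compatibility that the paper leaves implicit.
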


The preceding argument also gives a Galois-theoretic proof of the
positive-index case of the independence theorem of Ablinger and
Schneider \cite{AblingerSchneider}.

\begin{cor}
The algebra over $\kf(s)$ generated by the finite harmonic sums is a
polynomial algebra on the sums $H_w$ indexed by Lyndon words.  Evaluation
identifies it with $\kf(s)\otimes\QSh$, and its constants are $\kf$.
\label{AS-form}
\end{cor}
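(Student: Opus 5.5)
The plan is to pass to the limit over $N$ in Theorem~\ref{main-theorem}. Consider the evaluation map
\[
 \theta\colon\kf(s)\otimes\QSh\longrightarrow\Seq(\kf),\qquad f\otimes w\longmapsto f\cdot\langle H_w(M)\rangle .
\]
By \eqref{harmonic-character} it is a ring homomorphism. By \eqref{coordinate-harmonic} it restricts to $\theta_N$ on $\kf(s)\otimes\QSh^{\leq N}$. Its image is the $\kf(s)$-algebra generated by the finite harmonic sums, since every word lies in some $\QSh^{\leq N}$.

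\textbf{Injectivity.} The subalgebras $\QSh^{\leq N}$ exhaust $\QSh$: a word of weight $n$ is, by Hoffman's theorem, a polynomial in Lyndon words of weight at most $n$, so it lies in $\QSh^{\leq n}$. Hence $\kf(s)\otimes\QSh$ is the increasing union of the subrings $\kf(s)\otimes\QSh^{\leq N}$. Any element of $\ker\theta$ therefore lies in some $\ker\theta_N$, which is zero by Theorem~\ref{main-theorem}. So $\theta$ is injective, and the algebra generated by the harmonic sums is identified with $\kf(s)\otimes\QSh$.

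\textbf{Polynomiality.} By Hoffman's theorem, $\QSh$ is a polynomial algebra over $\kf$ on the Lyndon words. Base change to $\kf(s)$ keeps it polynomial on the same generators. Transporting through the injective $\theta$ shows that the sums $H_w$, for $w$ Lyndon, are algebraically independent over $\kf(s)$ and generate the image.

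\textbf{Constants.} Let $c$ be a constant, so $\sigma(c)=c$. Then $c$ lies in some $R_N$, because the shift \eqref{harmonic-shift} preserves each $R_N$. Since $R_N$ is a Picard--Vessiot ring, its constants are $\kf$. Hence $c\in\kf$.

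The only point needing care is that the difference structure on $\kf(s)\otimes\QSh$ is compatible with the finite-weight pieces. This follows from \eqref{harmonic-shift}, which does not raise weight, together with the Hopf subalgebra property of $\QSh^{\leq N}$. I expect no real obstacle beyond this bookkeeping.
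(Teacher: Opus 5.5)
Your proposal is correct and follows the paper's proof: you exhaust $\QSh$ by the subalgebras $\QSh^{\leq N}$, deduce injectivity of evaluation from Theorem~\ref{main-theorem}, and read off the polynomial generators from Hoffman's theorem. The only difference is in the constants step. The paper simply notes that a shift-invariant element of $\Seq(\kf)$ is an eventually constant sequence, so it needs neither the Picard--Vessiot property of each $R_N$ nor the check that $\sigma$ preserves $R_N$.
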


\begin{proof}
Every finite family of elements of $\QSh$ belongs to
$\QSh^{\leq N}$ for some $N$.  Theorem~\ref{main-theorem} therefore
makes evaluation on the full quasi-shuffle algebra injective.
Hoffman's theorem identifies its polynomial generators.  The assertion
about constants follows from its embedding in $\Seq(\kf)$.
\end{proof}

We give the precise comparison with Ablinger and Schneider's summation
convention.  Both the sign of the merged letter and the shift formula
change when weak inequalities are replaced by strict inequalities.

\begin{prop}
For positive integers $m_1,\ldots,m_r$, put
\begin{equation}
 S_{m_1,\ldots,m_r}(M)
 =\sum_{M\geq n_1\geq\cdots\geq n_r\geq1}
 \frac{1}{n_1^{m_1}\cdots n_r^{m_r}},
 \qquad S_{\varnothing}(M)=1.
 \label{weak-harmonic-sum}
\end{equation}
If $I=(i_1,\ldots,i_k)\vDash r$, let $I[\mathbf m]$ be the $k$-tuple obtained
by replacing each consecutive block of $i_j$ entries of
$\mathbf m=(m_1,\ldots,m_r)$ by their sum.  Then
\begin{align}
 S_{\mathbf m}
 &=\sum_{I\vDash r}H_{I[\mathbf m]},
 \label{weak-to-strict}\\
 H_{\mathbf m}
 &=\sum_{I\vDash r}(-1)^{r-\ell(I)}S_{I[\mathbf m]}.
 \label{strict-to-weak}
\end{align}

Let $\star$ denote the weak quasi-shuffle product
\begin{equation}
 (y_au)\star(y_bv)
 =y_a(u\star(y_bv))+y_b((y_au)\star v)-y_{a+b}(u\star v).
 \label{weak-quasi-shuffle}
\end{equation}
The coarsening operator
\begin{equation}
 T(y_{m_1}\cdots y_{m_r})
 =\sum_{I\vDash r}y_{I[\mathbf m]}
 \label{coarsening-operator}
\end{equation}
is a weight-preserving algebra isomorphism from the weak quasi-shuffle
algebra $(\kf\langle Y\rangle,\star)$ to the strict quasi-shuffle algebra
$(\kf\langle Y\rangle,*)$.  After adjoining $\kf(s)$, it is an isomorphism of
difference algebras.
\label{strict-weak-proposition}
\end{prop}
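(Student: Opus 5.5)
We prove the two summation identities first, then deduce the algebraic statements from them.

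\emph{The identity \eqref{weak-to-strict}.} Split the weak region $M\geq n_1\geq\cdots\geq n_r\geq1$ according to which consecutive inequalities are equalities. A pattern of equalities is the same thing as a composition $I=(i_1,\ldots,i_k)\vDash r$: each block of $i_j$ consecutive indices shares a common value $\nu_j$, and $\nu_1>\cdots>\nu_k$. On that piece the summand is $\prod_j\nu_j^{-(\text{sum of the block})}$, so the piece contributes exactly $H_{I[\mathbf m]}$.

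\emph{The identity \eqref{strict-to-weak}.} This follows by Möbius inversion on the poset of compositions of $r$ ordered by refinement. Coarsening compositions of $r$ corresponds to taking subsets of the $r-1$ separators, so this poset is Boolean. Applying \eqref{weak-to-strict} to each coarsening $I[\mathbf m]$ and inverting gives the sign $(-1)^{r-\ell(I)}$. Equivalently, one can substitute and use $\sum_{J}(-1)^{|J|}=0$ over the subsets $J$ of a nonempty set.

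\emph{$T$ is an algebra isomorphism.} Since $T$ is coarsening, $T(w)$ equals $w$ plus terms of strictly smaller length and the same weight. So $T$ is unitriangular with respect to length on each finite-dimensional weight space, hence bijective and weight-preserving. For multiplicativity, the plan avoids a direct combinatorial check. First I would prove that $\star$ matches the product of the weak sums $S$, by the same decomposition of the product region as in the proof of \eqref{harmonic-character}. For two weak outer indices $n_1\geq\cdots$ and $n'_1\geq\cdots$, the regions $n_1\geq n'_1$ and $n'_1\geq n_1$ overlap on the diagonal, which must be subtracted once; this explains the minus sign in \eqref{weak-quasi-shuffle}. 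Induction on total depth then gives $S_{u\star v}=S_uS_v$.

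Combining this with \eqref{weak-to-strict}, which says $S_w=H_{T(w)}$, we get
\[
H_{T(u\star v)}=S_uS_v=H_{T(u)}H_{T(v)}=H_{T(u)*T(v)}.
\]
By Corollary~\ref{AS-form}, evaluation $w\mapsto H_w$ is injective on $\QSh$. Hence $T(u\star v)=T(u)*T(v)$.

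\emph{Compatibility with the shift.} After adjoining $\kf(s)$, transport the difference structure to the weak side through $T$. Concretely, the weak shift formula is
\[
\sigma S_{m_1,\ldots,m_r}=S_{m_1,\ldots,m_r}+(s+1)^{-m_1}S_{m_2,\ldots,m_r}+\cdots,
\]
where the further terms come from the new index $M+1$ being repeated: they are $\sum_j(s+1)^{-(m_1+\cdots+m_j)}S_{m_{j+1},\ldots,m_r}$. Both sides of the relation between $T$ and the shifts are then evaluations of sequences, so compatibility follows again from the injectivity of evaluation. Alternatively, one can check directly that $T$ intertwines this formula with \eqref{harmonic-shift}.

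The main obstacle is the multiplicativity of $T$ if one insists on a purely combinatorial proof. The argument via injectivity of evaluation sidesteps it, at the cost of depending on Theorem~\ref{main-theorem}. The only care needed is getting the diagonal sign in the weak product identity right.
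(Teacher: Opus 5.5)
Your argument is correct. The two summation identities, the unitriangularity of $T$, and the explanation of the minus sign in \eqref{weak-quasi-shuffle} by the doubly counted diagonal are handled as in the paper. The difference is in how you obtain $T(u\star v)=T(u)*T(v)$ and $T\sigma=\sigma T$.

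The paper checks these directly, by substituting \eqref{coarsening-operator} into the recursions \eqref{weak-quasi-shuffle}, \eqref{quasi-shuffle} and into the shift formulas \eqref{weak-shift}, \eqref{harmonic-shift}. You prove the corresponding identities only for the evaluated sequences. You then lift them to formal identities using injectivity of evaluation on $\kf(s)\otimes\QSh$ (Theorem~\ref{main-theorem}, Corollary~\ref{AS-form}). This is not circular, since neither result uses the proposition. It replaces the inductive bookkeeping for multiplicativity, which the paper compresses into a single sentence, by the one-line computation $H_{T(u\star v)}=S_uS_v=H_{T(u)*T(v)}$.

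The price is that a purely combinatorial statement now rests on the Picard--Vessiot computation. This matters because the paper uses the proposition right afterwards to rederive Corollary~\ref{AS-form} from Ablinger and Schneider's theorem, independently of the Galois argument. Your version still supports that use. Bijectivity of $T$, $S_w=H_{T(w)}$ and $S_{u\star v}=S_uS_v$ are all proved without any injectivity input. Because $S_w=H_{T(w)}$ with $T$ bijective, injectivity of the weak evaluation (their theorem) is equivalent to injectivity of the strict one, so your lifting step works with either input.

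You can also drop the dependence entirely by evaluating in commuting indeterminates $t_1,t_2,\ldots$ instead of $1/n$. The region decompositions are unchanged, and the monomial quasisymmetric functions attached to distinct index tuples have disjoint monomial supports, so injectivity is immediate. For the shift, a direct check is in any case one line: group the compositions in $T(w)$ by the length of their first block.
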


\begin{proof}
The verification may be made one step at a time.

First, every chain $n_1\geq\cdots\geq n_r$ has a unique set of places at
which the inequality is strict.  Collapsing each block of equal indices adds
the corresponding exponents.  Summing over the possible sets of strict places
gives \eqref{weak-to-strict}.

Second, the possible cuts between consecutive indices form a Boolean lattice.
M\"obius inversion on this lattice gives \eqref{strict-to-weak}.  Thus $T$ is
unitriangular with respect to depth and its displayed inverse has the signs
$(-1)^{r-\ell(I)}$.

Third, splitting a product of two weak sums according to the two outer
indices gives \eqref{weak-quasi-shuffle}: the diagonal has been included in
both weakly ordered regions and is therefore subtracted.  For strict sums the
diagonal is a third disjoint region and has the plus sign in
\eqref{quasi-shuffle}.  Substitution of \eqref{coarsening-operator} in the two
recursions gives
\begin{equation}
 T(u\star v)=T(u)*T(v).
 \label{coarsening-product}
\end{equation}

Fourth, the new outer index in a weak sum gives
\begin{align}
 \sigma(S_{m_1,\ldots,m_r})
 &=S_{m_1,\ldots,m_r}
 +\frac{1}{(s+1)^{m_1}}\sigma(S_{m_2,\ldots,m_r})
 \notag\\
 &=S_{m_1,\ldots,m_r}
 +\sum_{j=1}^r\frac{S_{m_{j+1},\ldots,m_r}}
 {(s+1)^{m_1+\cdots+m_j}},
 \label{weak-shift}
\end{align}
where the last factor is $S_{\varnothing}=1$ when $j=r$.  Applying $T$ and using
\eqref{weak-to-strict} reduces \eqref{weak-shift} to the strict shift
\eqref{harmonic-shift}.  Hence $T\sigma=\sigma T$.

At depths two and three, the two transformations read
\begin{align*}
 S_{a,b}&=H_{a,b}+H_{a+b},
 &H_{a,b}&=S_{a,b}-S_{a+b},\\
 S_{a,b,c}&=H_{a,b,c}+H_{a+b,c}+H_{a,b+c}+H_{a+b+c},\\
 H_{a,b,c}&=S_{a,b,c}-S_{a+b,c}-S_{a,b+c}+S_{a+b+c}.
\end{align*}
Also
\begin{equation}
 H_aH_b=H_{a,b}+H_{b,a}+H_{a+b},
 \qquad
 S_aS_b=S_{a,b}+S_{b,a}-S_{a+b}.
 \label{strict-weak-depth-two-products}
\end{equation}
These formulae check explicitly the coefficients and signs used above.
\end{proof}

Ablinger and Schneider work over a field containing $\mathbb Q$.
Restricting their alphabet to the letters $(1,0,m,1)$ with $m\geq1$
gives their nonalternating harmonic alphabet $A_h$, whose nested sum
is exactly \eqref{weak-harmonic-sum}; see
\cite[(1), (5), and Section~2]{AblingerSchneider}.

After quotienting the formal algebra of weak sums by its quasi-shuffle
relations, they realize the reduced difference ring as a tower of
$\Sigma^*$-extensions without new constants and embed it in sequences.
Their basis sums are algebraically independent over the rational
sequences \cite[Theorem~3 and Corollary~1]{AblingerSchneider}.
Proposition~\ref{strict-weak-proposition} transfers this statement to our
strict convention: the map $T$ preserves weight and shift, and its
inverse \eqref{strict-to-weak} ensures that no relation is lost in
either direction.  Hoffman's theorem then identifies the strict
polynomial generators with Lyndon words.  This recovers precisely
Corollary~\ref{AS-form} from their theorem.  The proof above obtains
this positive-index case directly from the Picard--Vessiot group;
their result also treats the larger cyclotomic alphabets.

\section{The Mellin--KZ difference equation}
\label{sec:Mellin-KZ}

We now specialize the universal system of Section~\ref{sec:universal-system}
to the Mellin transform of the KZ equation \eqref{KZ-intro}.
Let $A$ and $B$ be complex $d\times d$ matrices, let $G(z)$ be a solution
of \eqref{KZ-intro} on $(0,1)$, and put
\[
 F(s)=\int_0^1z^{s-1}G(z)\,dz.
\]
On a domain where this integral converges and $z^s(1-z)G(z)$ vanishes
at both endpoints, integration by parts gives
\[
 0=\int_0^1\partial_z\bigl(z^s(1-z)G(z)\bigr)\,dz
   =(s+A)F(s)-(s+1+A-B)F(s+1).
\]
For simultaneously strictly upper triangular $A$ and $B$, the entries
of $G$ have at most polynomial growth in the endpoint logarithms.
Consequently, the integral and the vanishing boundary terms are valid
for $\operatorname{Re}s>0$.
Thus the Mellin transform satisfies
\begin{equation}
 (s+1+A-B)F(s+1)=(s+A)F(s).
 \label{Mellin-KZ}
\end{equation}
We study this rational difference equation over $\kf(s)$ for
$d\times d$ matrices $A$ and $B$ over $\kf$.
The rational gauge transformation
\begin{equation}
 F(s)=(s+A)^{-1}U(s)
 \label{rational-gauge}
\end{equation}
gives
\begin{equation}
 U(s+1)=D(s)U(s),
 \qquad
 D(s)=(s+1+A)(s+1+A-B)^{-1}.
 \label{unipotent-KZ-system}
\end{equation}

Put
\begin{equation}
 C=B-A,
 \qquad t=\frac1{s+1}.
 \label{C-t}
\end{equation}
Then
\begin{equation}
 D(s)=(1+tA)(1-tC)^{-1}
 =1+tB(1-tC)^{-1}
 =1+\sum_{m\geq1}BC^{m-1}t^m.
 \label{KZ-Xm}
\end{equation}
Thus, the universal variables are specialized by
\begin{equation}
 X_m\longmapsto BC^{m-1}.
 \label{KZ-specialization}
\end{equation}

To examine the universal specialization, we first regard $B$ and $C$
as free noncommuting variables and set $A=B-C$.  We return to fixed
matrices in Theorem~\ref{fixed-matrix-theorem}.

\begin{prop}
The algebra morphism
\begin{equation}
 \iota\colon
 \kf\langle X_1,X_2,\ldots\rangle
 \longrightarrow\kf\langle B,C\rangle,
 \qquad X_m\longmapsto BC^{m-1},
 \label{code-map}
\end{equation}
is injective.  It remains injective after completion with respect to the
weight defined by $\wt(X_m)=m$ and $\wt(B)=\wt(C)=1$.
\label{code-map-proposition}
\end{prop}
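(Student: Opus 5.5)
The map $\iota$ sends each word $X_{m_1}\cdots X_{m_r}$ to the word $BC^{m_1-1}BC^{m_2-1}\cdots BC^{m_r-1}$ in $\kf\langle B,C\rangle$. We will show that this assignment on monomials is injective and lands on pairwise distinct monomials. Then $\iota$ carries a basis of $\kf\langle X_1,X_2,\ldots\rangle$ injectively into a basis of $\kf\langle B,C\rangle$, so it is injective.

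\emph{Unique decoding.} The image monomials are exactly the words in $B,C$ that begin with $B$, together with the empty word. Such a word $w$ factors uniquely as a concatenation of blocks $BC^{k}$ with $k\geq0$: cut immediately before each occurrence of $B$. The block exponents $k_j$ recover $m_j=k_j+1$. Hence $X_{m_1}\cdots X_{m_r}$ is determined by its image, and distinct monomials in the $X_m$ go to distinct monomials in $B,C$. In coding-theory language, $\{BC^{m-1}\}_{m\geq1}$ is a prefix code.

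\emph{Polynomial algebra.} Let $p=\sum_u \lambda_u u$ be a finite linear combination of words in the $X_m$. Then $\iota(p)=\sum_u\lambda_u\iota(u)$. The $\iota(u)$ are distinct basis words of $\kf\langle B,C\rangle$, so $\iota(p)=0$ forces every $\lambda_u=0$.

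\emph{Completion.} The map preserves weight, since $\wt(BC^{m-1})=m=\wt(X_m)$. It therefore extends to the completions, and the extension acts degreewise. An element of the completed source is a possibly infinite sum $\sum_u\lambda_u u$. Its image is $\sum_u\lambda_u\iota(u)$, which is a well-defined formal series because each weight component is finite on both sides. Since the $\iota(u)$ are pairwise distinct words, the coefficient of $\iota(u)$ in the image is exactly $\lambda_u$. So a zero image forces all coefficients to vanish. Equivalently, one can apply the polynomial case to each homogeneous weight component separately.

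I expect no real obstacle. The only point needing care is the unique factorization of words beginning with $B$ into blocks $BC^k$, which is immediate from cutting at each letter $B$.
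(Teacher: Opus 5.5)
Your proof is correct and follows the paper's own argument: cutting before each occurrence of $B$ recovers $(m_1,\ldots,m_r)$, so distinct words map to distinct basis words, and weight preservation extends injectivity to the completions. Your coefficientwise treatment of the completed case is a slightly more explicit version of the paper's one-line remark.
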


\begin{proof}
A word $X_{m_1}\cdots X_{m_r}$ is mapped to
\begin{equation}
 BC^{m_1-1}BC^{m_2-1}\cdots BC^{m_r-1}.
 \label{coded-word}
\end{equation}
The positions of the letters $B$ recover the decomposition into blocks and
hence recover $(m_1,\ldots,m_r)$.  Distinct words have distinct images.
The map preserves the weight, which proves the statement for the
completions.
\end{proof}

Let $L_m(A,B)$ be defined by
\begin{equation}
 \log\left(1+\sum_{m\geq1}BC^{m-1}t^m\right)
 =\sum_{m\geq1}L_m(A,B)t^m.
 \label{KZ-logarithm}
\end{equation}
By \eqref{Lm-formula},
\begin{align}
 L_1(A,B)&=B,\notag\\
 L_2(A,B)&=-BA+\frac12B^2,\notag\\
 L_3(A,B)&=BA^2-\frac12BAB-\frac12B^2A+\frac13B^3.
 \label{KZ-small-L}
\end{align}

\begin{cor}
The homomorphism from the free Lie algebra generated by the $L_m$ to the
commutator Lie algebra of $\kf\langle B,C\rangle$ induced by
\eqref{KZ-specialization} is injective.
\label{formal-faithfulness}
\end{cor}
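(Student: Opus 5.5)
The homomorphism in question factors as a composition of maps already known to be injective, so I would simply exhibit this factorization. Let $\mathfrak L$ be the free Lie algebra on symbols $\ell_1,\ell_2,\ldots$. Send $\ell_m\mapsto e_m$, then apply the isomorphism \eqref{free-dual-isomorphism} $e_m\mapsto L_m$ into the commutator Lie algebra of $\QSh^\vee=\kf\langle\!\langle X_1,X_2,\ldots\rangle\!\rangle$. Finally apply the completed map $\iota$ of Proposition~\ref{code-map-proposition}. By construction, $\iota$ sends $X(t)$ to $D(s)$ in \eqref{KZ-Xm}. Because $\iota$ is a continuous algebra morphism, it commutes with the logarithm of a series in $t$, so $\iota(L_m)=L_m(A,B)$ as defined in \eqref{KZ-logarithm}. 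The composite is therefore exactly the homomorphism of the statement, induced by \eqref{KZ-specialization}.

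Next I check injectivity of each factor. The map $\mathfrak L\to\Free$, $\ell_m\mapsto e_m$, is an isomorphism by definition of $\Free$. The map $\Free\to\widehat{U(\Free)}$ is injective by Poincar\'e--Birkhoff--Witt, since a free Lie algebra embeds in its universal enveloping algebra, which is the free associative algebra. The passage to the weight-completion is also injective, because everything is graded with finite-dimensional pieces. The isomorphism \eqref{free-dual-isomorphism} is injective. Finally, the completed $\iota$ is injective by Proposition~\ref{code-map-proposition}. Each step is a Lie algebra morphism for the commutator bracket, so the composite is an injective Lie algebra morphism.

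The only point needing care is the target. Each $L_m(A,B)$ is a polynomial: it is homogeneous of weight $m$ and lies in $\kf\langle B,C\rangle$, as the formulas \eqref{KZ-small-L} illustrate. Brackets of polynomials are polynomials, so the image lies in the uncompleted algebra $\kf\langle B,C\rangle$. Injectivity into the completion then gives injectivity into $\kf\langle B,C\rangle$. I expect no real obstacle. The main thing to verify is that $\iota$ commutes with taking the logarithm, which holds degree by degree in $t$ since each coefficient of the logarithm is a finite noncommutative polynomial \eqref{Lm-formula} in the $X_i$.
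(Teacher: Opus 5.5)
Your proof is correct and is essentially the paper's argument. The paper notes that the $L_m$ freely generate $\kf\langle X_1,X_2,\ldots\rangle$ because the change of generators $(X_m)\leftrightarrow(L_m)$ is triangular and invertible, which is exactly the algebra content of \eqref{free-dual-isomorphism} that you invoke; it then transports this through the injective $\iota$ of Proposition~\ref{code-map-proposition} and concludes because a free Lie algebra embeds in the free associative algebra on the same generators. The paper's triangularity already works for polynomials, so your passage through the completions is harmless but unnecessary.
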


\begin{proof}
The change of associative generators $(X_m)\leftrightarrow(L_m)$ defined by
the logarithm and exponential is triangular and invertible.
Proposition~\ref{code-map-proposition}
therefore implies that the $L_m(A,B)$ freely generate an associative
algebra.  Their Lie subalgebra is the free Lie algebra on these generators.
\end{proof}

The preceding statement concerns the formal variables $A$ and $B$.  The
following theorem gives an ordinary Picard--Vessiot statement for fixed
matrices.

\begin{theorem}
Suppose that $A$ and $B$ are simultaneously strictly upper triangular
$d\times d$ matrices.  Let
\begin{equation}
 \Lie_{A,B}=\operatorname{Lie}\langle
 L_1(A,B),\ldots,L_{d-1}(A,B)\rangle
 \subset\mathfrak{gl}_d(\kf).
 \label{matrix-Lie-algebra}
\end{equation}
Then the difference Galois groups over $\kf(s)$ of
\eqref{Mellin-KZ} and \eqref{unipotent-KZ-system} are both
\begin{equation}
 \exp(\Lie_{A,B})\subset\operatorname{GL}_d(\kf).
 \label{fixed-matrix-group}
\end{equation}
\label{fixed-matrix-theorem}
\end{theorem}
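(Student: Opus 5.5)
Throughout, put $N=d-1$.

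\emph{Reduction to the universal system.} Since $A$ and $B$ are simultaneously strictly upper triangular, so is $C=B-A$. Any product of $d$ strictly upper triangular $d\times d$ matrices vanishes. Hence every $X$-word of weight $\ge d$ is killed by the specialization \eqref{KZ-specialization}, because $BC^{m_1-1}\cdots BC^{m_r-1}$ is a product of $m_1+\cdots+m_r$ such matrices. The same holds for every Lie word in the $L_m(A,B)$ of weight $\ge d$. So $e_m\mapsto L_m(A,B)$ defines a Lie algebra morphism $\Free_{\le N}\to\mathfrak{gl}_d(\kf)$ with image $\Lie_{A,B}$; the generators $L_m(A,B)$ with $m\ge d$ already vanish, so all of them lie in this image. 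Since $\Free_{\le N}$ is nilpotent and the image consists of nilpotent matrices, exponentiation gives a morphism of unipotent algebraic groups $\pi\colon\GG_N\to\operatorname{GL}_d$. Its image is the closed subgroup $H=\exp(\Lie_{A,B})$, and $\pi\colon\GG_N\to H$ is surjective. It is compatible with the group-like series: $\pi(x_N(t))=\exp\bigl(\sum_{m\le N}L_m(A,B)t^m\bigr)$, and by \eqref{KZ-logarithm} together with the vanishing of $L_m(A,B)$ for $m\ge d$ this equals $1+\sum_m BC^{m-1}t^m$. By \eqref{KZ-Xm} this is $D(s)$ at $t=1/(s+1)$. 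Thus $\pi(a_N(s))=D(s)$, and \eqref{unipotent-KZ-system} is exactly the system \eqref{quotient-system} for the faithful representation $H\subset\operatorname{GL}_d$. Corollary~\ref{quotient-corollary} then gives Galois group $H$ for \eqref{unipotent-KZ-system}.

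\emph{Transfer to \eqref{Mellin-KZ}.} The gauge transformation \eqref{rational-gauge} has matrix $(s+A)^{-1}\in\operatorname{GL}_d(\kf(s))$, invertible because $A$ is nilpotent. A fundamental solution $F=(s+A)^{-1}U$ generates the same ring $\kf(s)[U,\det U^{-1}]$ over $\kf(s)$, since $(s+A)^{\pm1}$ has entries in $\kf(s)$. So the Picard--Vessiot ring is the same and the Galois action is again $U\mapsto Uc$, i.e.\ $F\mapsto Fc$. Hence the Galois group of \eqref{Mellin-KZ} is the same subgroup $H\subset\operatorname{GL}_d(\kf)$, not merely an isomorphic one. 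One should also check that \eqref{Mellin-KZ} is a genuine system over $\kf(s)$: $s+1+A-B=s+1-C$ is invertible over $\kf(s)$, since $C$ is nilpotent.

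\emph{Main obstacle.} The delicate point is the identification $\pi(x_N(t))=D(s)$ at finite weight. The image $\pi(x_N)$ uses only $L_1,\dots,L_{N}$. I would argue in the truncated algebra $\kf\langle\!\langle X\rangle\!\rangle/(\text{weight}\ge d)$: there $X(t)=\exp(\sum_{m<d}L_mt^m)$ holds modulo weight $\ge d$, and the specialization factors through this quotient because products of $d$ strictly upper triangular matrices vanish. One must also confirm that $\pi$ is a well-defined group morphism and not only a Lie algebra map; this is standard, since the Baker--Campbell--Hausdorff formula is functorial for nilpotent Lie algebras. The remaining verification is that $\exp(\Lie_{A,B})$ is a closed subgroup. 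This holds because $\Lie_{A,B}$ is a nilpotent Lie algebra of nilpotent matrices.
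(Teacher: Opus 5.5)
Your proposal is correct and is essentially the paper's proof: kill all words of weight $\geq d$, obtain the surjection $\GG_{d-1}\to\exp(\Lie_{A,B})$ sending $a_{d-1}(s)$ to $D(s)$ via \eqref{KZ-logarithm}, apply Corollary~\ref{quotient-corollary}, and transfer to \eqref{Mellin-KZ} through the rational gauge \eqref{rational-gauge}. The one step you skip is the case $d=1$, which the paper treats separately: there your $N=d-1=0$ lies outside the range $N\geq1$ in which $\GG_N$ and the corollary are set up, but the claim is trivial since $A=B=0$, or you can simply take $N=\max(d-1,1)$.
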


\begin{proof}
For $d=1$, both residues vanish and \eqref{unipotent-KZ-system} is
trivial; the gauge transformation \eqref{rational-gauge} proves the
claim.  Assume $d\geq2$.

Each $L_m(A,B)$ is homogeneous of degree $m$ in $A,B$.
Thus every Lie word of total weight at least $d$ becomes a sum of
products of at least $d$ strictly upper triangular matrices and
vanishes.  In particular, $L_m(A,B)=0$ for $m\geq d$, and the assignment
\begin{equation}
 e_m\longmapsto L_m(A,B)
 \label{Lie-specialization}
\end{equation}
defines a surjective Lie algebra morphism
\[
 \Free_{\leq d-1}\longrightarrow\Lie_{A,B}.
\]
It integrates to a surjective morphism of unipotent groups
\[
 \GG_{d-1}\longrightarrow\exp(\Lie_{A,B}).
\]
By \eqref{KZ-logarithm}, the image of the universal coefficient is exactly
$D(s)$.  Corollary~\ref{quotient-corollary} therefore gives
\eqref{fixed-matrix-group} for \eqref{unipotent-KZ-system}.  Finally, a
rational gauge transformation over the base field does not change the
Picard--Vessiot group.  Equation \eqref{rational-gauge} gives the result for
\eqref{Mellin-KZ}.
\end{proof}

\begin{example}
Let $N$ be the regular $3\times3$ nilpotent Jordan block and set $A=B=N$.
Then
\[
 D(s)=1+\frac{N}{s+1},\qquad
 \log D(s)=\frac{N}{s+1}-\frac{N^2}{2(s+1)^2}.
\]
Thus $L_1=N$, $L_2=-N^2/2$, and
Theorem~\ref{fixed-matrix-theorem} gives
\[
 \Gal=\exp(\kf N\oplus\kf N^2)\simeq\mathbb G_a^2.
\]
The coefficients of $D(s)=1+\sum_{m\geq1}X_m(s+1)^{-m}$ are
$X_1=N$ and $X_m=0$ for $m>1$.  They generate only the one-dimensional
Lie algebra $\kf N$.  This explains why the Galois theorem uses the
coefficients of $\log D(s)$.
\end{example}

The formal KZ specialization is sufficiently large to realize every
finite-weight universal group by matrices.

\begin{prop}
For every $N$, there are simultaneously strictly upper triangular matrices
$A_N$ and $B_N$ for which the Galois group in
Theorem~\ref{fixed-matrix-theorem} is isomorphic to $\GG_N$.
\label{finite-realization}
\end{prop}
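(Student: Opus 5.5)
The plan is to realize the truncated free algebra $\kf\langle B,C\rangle/(\text{words of length}>N)$ by matrices and then invoke Theorem~\ref{fixed-matrix-theorem}. Let $V=\kf\langle B,C\rangle_{\leq N}$ be the space of noncommutative polynomials in $B,C$ of degree at most $N$. Its dimension is $d=2^{N+1}-1$. Let $B_N$ and $C_N$ act on $V$ by left multiplication, truncating terms of degree greater than $N$, and put $A_N=B_N-C_N$. Both operators raise degree by one, so in a basis ordered by decreasing degree they are simultaneously strictly upper triangular, and hence so are $A_N$ and $B_N$. The associative algebra they generate is $\kf\langle B,C\rangle/I_{>N}$, where $I_{>N}$ is the span of the words of length greater than $N$. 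This representation is faithful on that quotient, since an element acting on the vector $1$ returns itself.

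By Theorem~\ref{fixed-matrix-theorem}, the Galois group is $\exp(\Lie_{A_N,B_N})$, where $\Lie_{A_N,B_N}$ is generated by the $L_m(A_N,B_N)$ with $m\leq d-1$. The weight argument in that proof shows $L_m(A_N,B_N)=0$ for $m>N$. So $\Lie_{A_N,B_N}$ is the image of $\Free_{\leq N}$ under $e_m\mapsto L_m(A_N,B_N)$, and this map is surjective. It remains to prove injectivity, because then $\exp(\Lie_{A_N,B_N})\simeq\exp(\Free_{\leq N})=\GG_N$.

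For injectivity, take a homogeneous Lie element $P\in\Free_q$ with $q\leq N$ whose image vanishes. Under \eqref{free-dual-isomorphism}, $P$ corresponds to a homogeneous element of weight $q$ in the Lie algebra generated by the $L_m$ inside $\kf\langle\!\langle X\rangle\!\rangle$. Its image under the code map $\iota$ of Proposition~\ref{code-map-proposition} is homogeneous of degree $q\leq N$ in $B,C$. Such an element maps injectively to the truncated quotient, so its vanishing forces $\iota(P)=0$. By Corollary~\ref{formal-faithfulness}, $P=0$.

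Since the map preserves the weight grading, a kernel element splits into homogeneous components, each of which must vanish by the argument above. This reduction uses the fact that $\Lie_{A_N,B_N}$ is graded compatibly in $\kf\langle B,C\rangle/I_{>N}$. I expect this to be the main point requiring care. The grading by degree in $B,C$ is respected because the truncated algebra is graded and each $L_m(A_N,B_N)$ is homogeneous of degree $m$. It then remains to note that $\exp$ is an isomorphism from nilpotent Lie algebras to unipotent groups, so the isomorphism of Lie algebras gives $\exp(\Lie_{A_N,B_N})\simeq\GG_N$.
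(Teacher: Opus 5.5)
Your proposal is correct and follows the paper's proof essentially step by step. It uses the same truncated free algebra $\kf\langle B,C\rangle/I_{>N}$ acting by left multiplication, the vanishing of all terms of weight above $N$, and faithfulness in weights at most $N$ via Proposition~\ref{code-map-proposition}, Corollary~\ref{formal-faithfulness} and evaluation at the empty word; your explicit reduction to homogeneous components spells out a step the paper leaves implicit. One small correction of attribution: the vanishing of $L_m(A_N,B_N)$ for $m>N$ does not come from the $d\times d$ nilpotency argument in the proof of Theorem~\ref{fixed-matrix-theorem}, which only gives $m\geq d$, but from the length grading of $V$, since every word of length greater than $N$ in $B_N,C_N$ acts by zero.
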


\begin{proof}
Let $V_N$ be the quotient of $\kf\langle B,C\rangle$ by the two-sided ideal
spanned by words of length greater than $N$.  Let $B_N$ and $C_N$ act on
$V_N$ by left multiplication and set $A_N=B_N-C_N$.  With respect to the
filtration by word length, these operators are simultaneously strictly
upper triangular.

Every term of weight greater than $N$ acts by zero on $V_N$, so the
Lie algebra map \eqref{Lie-specialization} factors through
$\Free_{\leq N}$.

By Proposition~\ref{code-map-proposition}, the map from the free associative
algebra on the $X_m$ to $\kf\langle B,C\rangle$ is injective in weights at
most $N$.  Left
multiplication on $V_N$ is faithful on this part, since a nonzero element is
detected by its value on the empty word.  The same holds for the free Lie
algebra generated by the $L_m$.  Thus,
\eqref{Lie-specialization} is faithful on $\Free_{\leq N}$, and
Theorem~\ref{fixed-matrix-theorem} gives the result.
\end{proof}

\section{Transport algebra}
\label{sec:transport}

We now express the universal system of Section~\ref{sec:universal-system}
and its Mellin--KZ specialization from Section~\ref{sec:Mellin-KZ}
in terms of the transport algebra of Deligne and Terasoma,
\cite[Proposition 5.1]{DeligneTerasoma},
\cite[Section 6.2]{Terasoma} and \cite[Section 5]{Markarian}.
After extension of scalars to $\kf$, it is the
weight-completed algebra
\begin{equation}
 W=\kf\langle\!\langle w_1,w_2,\ldots\rangle\!\rangle,
 \qquad
 \Delta_*(w_n)=\sum_{i=0}^n w_i\otimes w_{n-i},
 \quad w_0=1,\quad \wt(w_n)=n.
 \label{transport-coproduct}
\end{equation}
This coproduct describes convolution in the de Rham realization
\cite[Proposition 6.2]{DeligneTerasoma}.  Comparison with
\eqref{dual-coproduct} gives an isomorphism of complete Hopf algebras
\begin{equation}
 \eta\colon W\xrightarrow{\sim}\QSh^\vee,
 \qquad w_n\longmapsto X_n.
 \label{transport-identification}
\end{equation}
In particular, $W$ is the noncommutative transport algebra;
its restricted graded dual is the commutative coordinate algebra.

Put
\begin{equation}
 a_W(s)=1+\sum_{n\geq1}\frac{w_n}{(s+1)^n}.
 \label{transport-coefficient}
\end{equation}
The series is interpreted weight by weight.  It is group-like by
\eqref{transport-coproduct}.

\begin{prop}
For a finite-dimensional continuous left $W$-module
$(V,\rho)$, let $\mathcal D(V)$ be the difference module over $\kf(s)$
defined by
\begin{equation}
 U(s+1)=\rho(a_W(s))U(s).
 \label{transport-system}
\end{equation}
This assignment is an exact faithful tensor functor, where tensor products
of $W$-modules are defined using $\Delta_*$.
\end{prop}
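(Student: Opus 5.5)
We first make the functor $\mathcal D$ precise and then check its properties one at a time. A continuous finite-dimensional module $(V,\rho)$ means that $\rho$ kills all terms of weight greater than some $N$. So $\rho(w_n)=0$ for $n$ large, and $\rho(a_W(s))$ is a finite sum with entries in $\kf(s)$. Its leading term is the identity, and $\rho$ of the weight-$\geq1$ part is nilpotent, because continuity forces $\rho(W_{\geq N+1})=0$ while products of $N+1$ positive-weight elements lie there. Hence $\rho(a_W(s))$ is unipotent and invertible, and $\mathcal D(V)=\kf(s)\otimes V$ with $\sigma(f\otimes v)=\sigma(f)\rho(a_W(s))v$ is a difference module. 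A $W$-linear map $\phi\colon V\to V'$ commutes with $\rho(a_W(s))$, since that element is a $\kf(s)$-combination of the $w_n$. Therefore $\mathcal D(\phi)=1\otimes\phi$ is a morphism of difference modules, and functoriality is immediate.

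Exactness and faithfulness come from the underlying vector spaces. The functor is $V\mapsto \kf(s)\otimes_\kf V$ on underlying spaces, with morphisms sent to $1\otimes\phi$. Since $\kf(s)$ is flat (free) over $\kf$, the functor is exact. Since $\phi\neq0$ implies $1\otimes\phi\neq0$, it is faithful.

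For the tensor structure, the module $V\otimes V'$ has action $(\rho\otimes\rho')\circ\Delta_*$, extended $\kf(s)$-linearly. The key identity is
\[
 \Delta_*(a_W(s))=a_W(s)\otimes a_W(s),
\]
computed weight by weight in $W\widehat\otimes W\otimes\kf(s)$. It follows from \eqref{transport-coproduct}: the weight-$n$ coefficient on both sides is $\sum_{i+j=n}w_i\otimes w_j\,(s+1)^{-n}$. Applying $\rho\otimes\rho'$ gives that the difference operator on $\mathcal D(V\otimes V')$ is $\rho(a_W)\otimes\rho'(a_W)$. This is precisely the difference structure on $\mathcal D(V)\otimes_{\kf(s)}\mathcal D(V')$. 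Hence the identity map $\kf(s)\otimes(V\otimes V')=(\kf(s)\otimes V)\otimes_{\kf(s)}(\kf(s)\otimes V')$ is an isomorphism of difference modules. The unit is handled by the counit: the trivial module $\kf$ has $\varepsilon(w_n)=0$, so $\mathcal D(\kf)$ is the unit difference module $\sigma(U)=U$. Compatibility with the associativity, commutativity and unit constraints is automatic. These identifications are identities on underlying vector spaces, and the constraints on both sides are the usual ones of vector spaces, using cocommutativity of $\Delta_*$. Duals, if required, are handled by the antipode, since $\rho^\vee(a_W)=\rho(S(a_W))^{T}=\rho(a_W^{-1})^{T}$ for the group-like element $a_W$.

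The main, if modest, obstacle is making the infinite series rigorous. The argument should note that all identities are taken in a finite-weight quotient $W/W_{>N}$ through which $\rho$, $\rho'$ and $\rho\otimes\rho'$ factor. On $V\otimes V'$ one takes the maximum of the two truncation weights. The group-like identity in that quotient is a finite computation. Alternatively, one may transport everything through $\eta$ in \eqref{transport-identification}: $a_W(s)$ corresponds to $X(1/(s+1))$, whose group-likeness is \eqref{X-group-like}.
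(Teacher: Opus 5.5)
Your proposal is correct and follows the paper's proof. Continuity makes $\rho(a_W(s))$ a finite unipotent sum, $W$-linear maps commute with it, extension of scalars to $\kf(s)$ gives exactness and faithfulness, and the group-likeness of $a_W(s)$ together with the counit gives the tensor structure.

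One small correction to your last paragraph: the action on $V\otimes V'$ vanishes only above weight $N+N'$, not above $\max(N,N')$. For instance, $\Delta_*(w_2)$ contains $w_1\otimes w_1$, which acts by $\rho(w_1)\otimes\rho'(w_1)\neq0$ in general. So you should work in $W/W_{>N+N'}$, or simply weight by weight; this bound is also what shows $V\otimes V'$ is continuous, and the argument is otherwise unaffected.
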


\begin{proof}
Continuity means that all terms of sufficiently large weight act by zero.
Thus, $\rho(a_W(s))$ is a finite sum and is invertible, since its constant
term is $1$ and its remaining part belongs to the image of the nilpotent
augmentation ideal in the corresponding weight truncation.
Module morphisms give constant matrices intertwining the systems, and
extension of scalars to $\kf(s)$ is exact and faithful.  Finally,
\begin{equation}
 \rho_{V\otimes V'}(a_W(s))
 =\rho_V(a_W(s))\otimes\rho_{V'}(a_W(s)),
 \label{transport-tensor}
\end{equation}
because $a_W(s)$ is group-like.  The counit gives the trivial system on
the tensor unit.
\end{proof}

For simultaneously strictly upper triangular matrices $A$ and $B$, the
specialization \eqref{KZ-specialization} defines a continuous
$W$-module by
\begin{equation}
 \rho_{A,B}(w_m)=B(B-A)^{m-1},
 \qquad \rho_{A,B}(a_W(s))=D(s).
 \label{transport-KZ-specialization}
\end{equation}
Thus \eqref{transport-system} specializes to the gauged Mellin--KZ
equation \eqref{unipotent-KZ-system}.  This identifies the two
presentations used below.

\begin{cor}
Let $W_q$ be the homogeneous component of weight $q$, so that
$W=\prod_{q\geq0}W_q$.  Let
$\GG^{\mathrm{un}}_\sigma=\varprojlim_N\GG_N$ be the universal
pro-Picard--Vessiot group of \eqref{universal-system}.  Then
\begin{equation}
 \mathcal O(\GG^{\mathrm{un}}_\sigma)
 \simeq W^\vee,
 \qquad
 W^\vee
 =\bigoplus_{q\geq0}W_q^*.
 \label{transport-Galois}
\end{equation}
\end{cor}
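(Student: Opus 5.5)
The plan is to assemble the identification from three earlier results: the description of the pro-Picard--Vessiot group \eqref{pro-PV-group}, the finite-weight coordinate rings \eqref{coordinate-GN}, and the Hopf isomorphism $\eta$ of \eqref{transport-identification}. The pro-Picard--Vessiot group is $\varprojlim_N\GG_N$. Its coordinate ring is therefore the direct limit $\varinjlim_N\mathcal O(\GG_N)$ along the injective pullbacks of the surjections $\GG_{N+1}\to\GG_N$. By \eqref{coordinate-GN}, this is $\bigcup_N\QSh^{\leq N}$. Since every word of weight $q$ is a polynomial in Lyndon words of weight at most $q$ by Hoffman's theorem, this union is all of $\QSh$. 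The first step is to check that the transition maps are exactly the inclusions $\QSh^{\leq N}\subset\QSh^{\leq N+1}$. This holds because the quotient $\Free_{\leq N+1}\to\Free_{\leq N}$ is compatible with the map $e_m\mapsto L_m$ of \eqref{free-dual-isomorphism}, so both are restrictions of the single Hopf pairing between $\QSh$ and $\QSh^\vee$.

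The second step is to identify $\QSh$ with the restricted graded dual of $\QSh^\vee$. The pairing between words in $Y$ and words in the $X_m$ is weight-homogeneous and nondegenerate in each weight, and each weight component is finite-dimensional, since there are finitely many compositions of $q$. Hence $\QSh=\bigoplus_q(\QSh^\vee)_q^*$ as Hopf algebras. The product dualizes $\delta$ by construction of \eqref{dual-coproduct}, and deconcatenation dualizes concatenation.

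The third step is to transport this along $\eta$. Because $\eta$ preserves weight, it restricts to isomorphisms $W_q\simeq(\QSh^\vee)_q$. Its graded transpose then gives $\bigoplus_qW_q^*\simeq\bigoplus_q(\QSh^\vee)_q^*=\QSh\simeq\mathcal O(\GG^{\mathrm{un}}_\sigma)$. Because $\eta$ intertwines $\Delta_*$ with $\delta$ and the products, this is an isomorphism of Hopf algebras, not merely of vector spaces.

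The main point needing care is the consistency of the first step. The group $\GG_N$ was presented through $\Free_{\leq N}$ and the primitive coordinates $L_m$, while $W^\vee$ is naturally graded by the $X$-word weight. I would handle this by noting that $L_m$ is homogeneous of weight $m$ by \eqref{Lm-formula}, so the weight gradings agree under \eqref{free-dual-isomorphism}. With that, the kernel of $\widehat{U(\Free)}\to\widehat{U(\Free_{\leq N})}$ pairs to zero exactly with $\QSh^{\leq N}$, as in the proof of the proposition giving \eqref{coordinate-GN}.
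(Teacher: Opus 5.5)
Your proposal is correct and follows the paper's route. You identify $\mathcal O(\varprojlim_N\GG_N)$ with $\bigcup_N\QSh^{\leq N}=\QSh$ via \eqref{coordinate-GN} and then take the restricted graded dual of $\eta$, spelling out the transition-map compatibility and the weightwise duality that the paper leaves implicit. The paper additionally notes that $\eta$ sends $a_W(s)$ to $a_N(s)$ in each $\GG_N$. That observation ties the isomorphism to the transport systems $\mathcal D(V)$, but it is not needed for the bare Hopf isomorphism you prove.
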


\begin{proof}
Under $\eta$, the image of $a_W(s)$ in $\GG_N$ is $a_N(s)$.
Theorem~\ref{main-theorem} identifies its Galois group with $\GG_N$.
Passing to the inverse limit and taking the restricted graded dual of
\eqref{transport-identification} gives \eqref{transport-Galois}.
\end{proof}

The freeness of the group associated with $W$ is already
contained in its harmonic presentation.  The Galois theorem identifies
this group with the universal group realized by the harmonic-sum
solutions.  This is an algebraic comparison.  It does not identify
$\mathcal D$ with a geometric Mellin functor on the perverse-sheaf
quotient, nor compare its solution fiber functor with vanishing cycles.
Such a comparison must account for the extension at $1$ and the
Betti--de Rham comparison; the harmonic generators used here are not
being identified with logarithms of Betti monodromy operators.

\section{Fibers at $1$ and at infinity}
\label{sec:fibers}

In this section, take $\kf=\mathbb C$.  We work with finite-dimensional continuous
$W$-modules and their constant $W$-linear morphisms.  We use the universal
$W$-system \eqref{transport-system}; its matrix specialization
\eqref{transport-KZ-specialization} is the Mellin--KZ equation
\eqref{unipotent-KZ-system} of Section~\ref{sec:Mellin-KZ}.  Put
\[
 x_W(t)=1+\sum_{m\geq1}w_mt^m.
\]
The change of variable $q=s+1$, with $Y(q)=U(q-1)$, gives
\begin{equation}
 Y(q+1)=\rho(x_W(1/q))Y(q).
 \label{fiber-system}
\end{equation}
Thus the finite fiber used below is at $q=1$, or $s=0$ in the original
coordinate.  We use solutions on the positive integers.

Define
\begin{equation}
 P_M=x_W(1/M)\cdots x_W(1),\qquad P_0=1,
 \qquad \widetilde P_M=e^{-H_1(M)w_1}P_M.
 \label{fiber-products}
\end{equation}
Under \eqref{transport-identification}, $P_M$ is the ordered product
\eqref{Phi-product}, before weight truncation.  Its coefficient at
$w_{m_1}\cdots w_{m_r}$ is $H_{m_1,\ldots,m_r}(M)$.

\begin{prop}
The limit
\begin{equation}
 C_* = \lim_{M\longrightarrow\infty}\widetilde P_M
 \label{harmonic-connection}
\end{equation}
exists weight by weight and is group-like.  In every fixed weight
quotient, for some nonnegative integer $d$, one has
\begin{equation}
 P_M=M^{w_1}e^{\gamma w_1}C_*
       +O\bigl(M^{-1}(1+\log M)^d\bigr),
 \label{product-leading-term}
\end{equation}
where $\gamma$ is Euler's constant.
\end{prop}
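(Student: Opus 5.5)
We work in a fixed weight quotient $W_{\le N}$, which is a finite-dimensional unipotent algebra; all statements are then about finitely many coefficient sequences. The plan is to use the multiplicative recursion $\widetilde P_{M}=e^{-H_1(M)w_1}x_W(1/M)e^{H_1(M-1)w_1}\widetilde P_{M-1}$, that is
\[
 \widetilde P_M=\Bigl(e^{-w_1/M}\,e^{-H_1(M-1)w_1}x_W(1/M)e^{H_1(M-1)w_1}\Bigr)\widetilde P_{M-1}.
\]
Write $x_W(t)=e^{tw_1}\bigl(1+t^2R(t)\bigr)$ with $R$ polynomial in the truncation; this is possible because $x_W(t)=1+tw_1+O(t^2)$. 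Then the factor in parentheses equals $e^{-w_1/M}\,\mathrm{Ad}_{e^{-H_1(M-1)w_1}}\bigl(e^{w_1/M}(1+M^{-2}R(1/M))\bigr)$. Since $e^{w_1/M}$ commutes with $e^{H_1 w_1}$, this simplifies to $1+M^{-2}\,\mathrm{Ad}_{e^{-H_1(M-1)w_1}}(R')$ for a bounded polynomial $R'$.

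In the truncation, $\mathrm{Ad}_{e^{-Hw_1}}$ is a polynomial in $H$ of degree at most $N$. Because $H_1(M)\le 1+\log M$, each factor is $1+O(M^{-2}(1+\log M)^{N})$. A standard convergent-product argument in the finite-dimensional nilpotent algebra then gives the following: the partial products converge, since the sum of the norms of the perturbations is finite. The tail estimate is $\widetilde P_M=C_*+O(M^{-1}(1+\log M)^{d})$, obtained by summing $\sum_{n>M}n^{-2}(\log n)^N$. Nilpotence makes the product of finitely many correction terms polynomially controlled, so no growth issue arises beyond powers of $\log M$.

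Group-likeness follows because each $P_M$ is a product of group-like elements, and $e^{-H_1(M)w_1}$ is group-like since $w_1$ is primitive by \eqref{transport-coproduct}. Hence each $\widetilde P_M$ satisfies $\Delta_*\widetilde P_M=\widetilde P_M\otimes\widetilde P_M$, and the limit preserves this in every finite weight quotient of $W\widehat\otimes W$, because $\Delta_*$ is continuous and weight-preserving.

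For \eqref{product-leading-term}, write $P_M=e^{H_1(M)w_1}\widetilde P_M$ and use $H_1(M)=\log M+\gamma+O(1/M)$. This gives $e^{H_1(M)w_1}=M^{w_1}e^{\gamma w_1}\bigl(1+O(M^{-1})\bigr)$, where $M^{w_1}=e^{\log M\,w_1}$ has entries $O((1+\log M)^N)$. Multiplying by $C_*+O(M^{-1}(1+\log M)^d)$ yields the claim with $d$ enlarged to absorb the $M^{w_1}$ growth.

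The main obstacle is keeping the bookkeeping of the logarithmic factors honest. The conjugation by $e^{-H_1w_1}$ introduces polynomial growth in $\log M$, and one must check that it is always accompanied by $M^{-2}$ rather than $M^{-1}$. This is exactly why we factor out $e^{tw_1}$ first, so that the order-$t$ term is cancelled before conjugation.
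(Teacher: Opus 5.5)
Your proposal is correct and follows the paper's proof essentially step for step. Both use the same increment $\widetilde P_M\widetilde P_{M-1}^{-1}=\operatorname{Ad}(e^{-H_1(M-1)w_1})(e^{-w_1/M}x_W(1/M))$, cancelling the linear term before conjugating, and the same summable $O(M^{-2}(1+\log M)^d)$ bound with its tail estimate; group-likeness again comes from primitivity of $w_1$, and the leading term from $H_1(M)=\log M+\gamma+O(M^{-1})$, where your explicit absorption of the $(1+\log M)^N$ growth of $M^{w_1}$ into $d$ spells out a step the paper leaves implicit.
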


\begin{proof}
Since $H_1(M)-H_1(M-1)=1/M$, we have
\[
 \widetilde P_M\widetilde P_{M-1}^{-1}
 =\operatorname{Ad}\bigl(e^{-H_1(M-1)w_1}\bigr)
       \bigl(e^{-w_1/M}x_W(1/M)\bigr).
\]
The expression in the last parentheses is $1+O(M^{-2})$ in each fixed
weight quotient.  Conjugation introduces only a polynomial in
$H_1(M-1)$, since $\operatorname{ad}(w_1)$ is nilpotent in that quotient.
Consequently, the increments differ from $1$ by
$O(M^{-2}(1+\log M)^d)$ for some $d$.  This bound is summable, so the
ordered product converges.  Its tail gives
$\widetilde P_M-C_*=O(M^{-1}(1+\log M)^d)$, after increasing $d$ if necessary.
The constant term of $C_*$ is $1$; in particular, it is invertible.
Formula \eqref{product-leading-term} follows from
$H_1(M)=\log M+\gamma+O(M^{-1})$.

The series $x_W(t)$ is group-like, and $w_1$ is primitive.  Hence every
$\widetilde P_M$ is group-like.  Passing to the limit in each total weight gives
\begin{equation}
 \Delta_*(C_*)=C_*\otimes C_*.
 \label{connection-group-like}
\end{equation}
\end{proof}

Let $\mathcal C_W$ be the category of the continuous $W$-modules just
specified.  For $V\in\mathcal C_W$, let
$\operatorname{Sol}_+(V)$ be the vector space of sequences satisfying
\eqref{fiber-system} on the positive integers.  We denote by $V_1$ the
fiber of the prescribed constant bundle at $q=1$, and by $V_\infty$ a
second copy of $V$, interpreted as the space of regularized leading
coefficients.  Define
\begin{equation}
 \begin{split}
 \operatorname{ev}_1(Y)&=Y(1),\\
 \operatorname{lc}_\infty(Y)
   &=\lim_{M\longrightarrow\infty}
       e^{-H_1(M)\rho(w_1)}Y(M+1).
 \end{split}
 \label{fiber-evaluations}
\end{equation}
If $Y(1)=v$, then $Y(M+1)=\rho(P_M)v$, so the second limit is
$\rho(C_*)v$.  Both maps in \eqref{fiber-evaluations} are isomorphisms.

\begin{prop}
The functors
\[
 \omega_1(V)=V_1,\qquad \omega_\infty(V)=V_\infty
\]
are fiber functors on $\mathcal C_W$.  They have two natural tensor
isomorphisms
\begin{equation}
 T_{\mathrm{alg}},T_{\mathrm{hor}}\colon
       \omega_1\longrightarrow\omega_\infty,
 \qquad
 T_{\mathrm{alg},V}=\operatorname{id}_V,
 \qquad
 T_{\mathrm{hor},V}=\rho(C_*).
 \label{two-comparisons}
\end{equation}
Here $T_{\mathrm{alg}}$ is supplied by the common constant trivialization,
and $T_{\mathrm{hor}}=\operatorname{lc}_\infty\circ
\operatorname{ev}_1^{-1}$ is horizontal transport.  Their ratio is the
tensor automorphism represented by $C_*$.
\end{prop}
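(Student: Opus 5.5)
We verify three things in turn: that $\omega_1$ and $\omega_\infty$ are fiber functors, that $T_{\mathrm{alg}}$ and $T_{\mathrm{hor}}$ are natural tensor isomorphisms, and that their ratio is the tensor automorphism given by $C_*$.

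\emph{Fiber functors.} Both $\omega_1$ and $\omega_\infty$ are, as functors, the forgetful functor $V\mapsto V$ on $\mathcal C_W$. Morphisms are constant $W$-linear maps, and tensor products are formed on underlying vector spaces with $W$ acting through $\Delta_*$. So the forgetful functor is $\kf$-linear, exact, faithful and strictly monoidal, with unit sent to $\mathbb C$. To tie this to the solution spaces, I would note that $\operatorname{ev}_1\colon\operatorname{Sol}_+(V)\to V_1$ is bijective, since a solution is determined by $Y(1)$ through $Y(M+1)=\rho(P_M)Y(1)$. By \eqref{fiber-evaluations} and the preceding proposition, $\operatorname{lc}_\infty\circ\operatorname{ev}_1^{-1}=\rho(C_*)$, and $\rho(C_*)$ is invertible because $C_*$ has constant term $1$ and is therefore unipotent in every weight truncation.

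\emph{$T_{\mathrm{alg}}$.} Naturality and monoidality of the identity are immediate. If $\operatorname{ev}_1$ and $\operatorname{lc}_\infty$ are to be used, one also checks that they are natural and monoidal on solution spaces. For $\operatorname{ev}_1$ this holds because a constant $W$-linear map $f$ commutes with $\rho(x_W(1/q))$, and because the tensor product of two solutions solves the tensor system by \eqref{transport-tensor}.

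\emph{$T_{\mathrm{hor}}$.} This is the main point. Naturality means $f\rho_V(C_*)=\rho_{V'}(C_*)f$ for every $W$-linear $f\colon V\to V'$. Since $C_*$ is a weight-by-weight limit of elements of $W$ and $\rho$ is continuous, $\rho(C_*)$ is a finite polynomial in the $\rho(w_n)$ in each truncation, so $f$ commutes with it. Monoidality means $\rho_{V\otimes V'}(C_*)=\rho_V(C_*)\otimes\rho_{V'}(C_*)$, which follows directly from the group-likeness \eqref{connection-group-like} of $C_*$. On the unit object $\rho(C_*)=\varepsilon(C_*)=1$.

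The only delicate step is passing the limit through $\rho$ and $\Delta_*$. For this, fix a weight truncation $N$ that kills the action on $V$, $V'$ and $V\otimes V'$. Both constructions then depend only on the image of $C_*$ in the finite-dimensional quotient $W/\prod_{q>N}W_q$, where the limit is an ordinary limit in a finite-dimensional space.

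\emph{The ratio.} $T_{\mathrm{alg}}^{-1}\circ T_{\mathrm{hor}}$ is the natural tensor automorphism $V\mapsto\rho_V(C_*)$ of the forgetful functor $\omega_1$. By the group-like identity \eqref{connection-group-like}, $C_*$ is a point of $\GG^{\mathrm{un}}_\sigma(\mathbb C)$ under \eqref{transport-Galois}, so this automorphism is the one represented by $C_*$.
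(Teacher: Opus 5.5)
Your proposal is correct and follows essentially the paper's argument. Both functors are treated as copies of the forgetful functor, naturality comes from constant $W$-linear maps commuting with $\rho(C_*)$, and monoidality of $T_{\mathrm{hor}}$ comes from the group-likeness \eqref{connection-group-like} of $C_*$, which the paper gives as the equivalent form of the primitivity of $w_1$ in the normalization. Your weight-truncation argument for passing limits through $\rho$ and $\Delta_*$, and your check on the unit object, spell out details the paper leaves implicit.
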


\begin{proof}
Both functors are copies of the forgetful functor and are therefore
exact and faithful.  The reference comparison is tensor-compatible by
its definition.  The group-likeness of $x_W(t)$ makes pointwise tensor
products of solutions into solutions of the tensor product system.
The primitivity of $w_1$ makes the normalization in
\eqref{fiber-evaluations} compatible with tensor products.  Equivalently,
\eqref{connection-group-like} gives
$\rho_{V\otimes V'}(C_*)=\rho_V(C_*)\otimes\rho_{V'}(C_*)$.
Every constant $W$-linear morphism commutes with these constructions,
which proves naturality.
\end{proof}

We now identify the comparison matrix.  Let $a,b$ be two free
noncommuting variables.  The assignment $w_m\mapsto a^{m-1}b$ identifies
the underlying algebra $W$ with
$\mathbb C\oplus\mathbb C\langle\!\langle a,b\rangle\!\rangle b$.
Let
\[
 \pi\colon\mathbb C\langle\!\langle a,b\rangle\!\rangle
       \longrightarrow W
\]
be the linear projection which kills nonempty words ending in $a$ and
reads each remaining word in these generators.  Consider
\begin{equation}
 G'(z)=\left(\frac a z+\frac b{1-z}\right)G(z),
 \qquad G_0=G_1\Phi_{\mathrm{KZ}}(a,b),
 \label{comparison-KZ}
\end{equation}
where $G_0=K_0(z)z^a$ and $G_1=K_1(z)(1-z)^{-b}$ are the canonical
solutions with $K_0(0)=K_1(1)=1$.  Here the $K_i(z)$ are holomorphic
coefficientwise at the respective endpoints.  Set
$f_Y=\pi(\Phi_{\mathrm{KZ}}(a,b))$ and
\begin{equation}
 \Gamma_{\mathrm{reg}}(u)=e^{\gamma u}\Gamma(1+u)
  =\exp\left(\sum_{m\geq2}
        \frac{(-1)^m\zeta(m)}m u^m\right).
 \label{regularized-Gamma}
\end{equation}
The Gamma function in this formula is expanded at $u=0$.

The next formula is the generating-series regularization comparison
of Costermans and Hoang Ngoc Minh \cite[Theorem~6]{CostermansMinh},
also expressed by the regularization theorem of Ihara, Kaneko and Zagier
\cite[Section~2, Theorem~1]{IKZ}.  We include a proof to identify its
two sides with the comparison maps \eqref{two-comparisons}.

\begin{prop}
With these conventions, the comparison matrix is
\begin{equation}
 C_*=\Gamma_{\mathrm{reg}}(w_1)^{-1}f_Y.
 \label{Gamma-connection}
\end{equation}
Thus $T_{\mathrm{alg}}^{-1}T_{\mathrm{hor}}$ is represented by the
Gamma-corrected projected KZ associator.
\label{Gamma-comparison-proposition}
\end{prop}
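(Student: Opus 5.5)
The plan is to pass through generating series in an auxiliary variable $z$: realize the harmonic sums as Taylor coefficients of polylogarithms divided by $1-z$, and read off the large-$M$ behaviour of $P_M$ from the singularity at $z=1$. Under the identification $w_m\leftrightarrow a^{m-1}b$, write
\[
 L(z)=\sum_{u}\operatorname{Li}_u(z)\,u
\]
for the sum over words $u\in\mathbb C\langle a,b\rangle$. Normalize $L(z)=K_0(z)z^a$ so that $L$ is the solution $G_0$ of \eqref{comparison-KZ}.

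The first step is to verify, for every word ending in $b$, i.e.\ for $w_{m_1}\cdots w_{m_r}$,
\[
 \sum_{M\geq0}H_{m_1,\ldots,m_r}(M)\,z^M=\frac{\operatorname{Li}_{m_1,\ldots,m_r}(z)}{1-z}.
\]
This is the standard formula $\operatorname{Li}_{\mathbf m}(z)=\sum_{n_1>\cdots>n_r\geq1}z^{n_1}/(n_1^{m_1}\cdots n_r^{m_r})$, combined with multiplication by $(1-z)^{-1}$, which produces partial sums. In terms of $\pi$ it reads
\[
 \sum_M P_M z^M=(1-z)^{-1}\pi(L(z)).
\]
Words ending in $a$ carry the powers of $\log z$ and are holomorphic at $z=1$ up to such factors. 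The projection $\pi$ discards exactly those words, which matches the regularization that makes $z^a$ contribute nothing. This bookkeeping has to be checked against the convention $G_0=K_0(z)z^a$.

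The second step is the expansion near $z=1$. From $G_0=G_1\Phi_{\mathrm{KZ}}$ and $G_1=K_1(z)(1-z)^{-b}$ with $K_1(1)=1$, we get
\[
 L(z)=(1-z)^{-b}\Phi_{\mathrm{KZ}}(a,b)+O\bigl((1-z)\log^k(1-z)\bigr)
\]
weight by weight. The key algebraic point is that $\pi$ commutes with left multiplication by powers of $b$ on words ending in $b$: $\pi(b^ku)=w_1^k\pi(u)$, since $b=w_1$ as a block. Hence
\[
 (1-z)^{-1}\pi(L(z))=(1-z)^{-1-w_1}f_Y+(\text{lower order terms}).
\]
In each fixed weight this is a finite sum of terms $(1-z)^{-1}\log^k\!\frac1{1-z}$ times constants, plus terms of the form $\log^k(1-z)$ times functions holomorphic at $1$.

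The third step is coefficient extraction. For $\operatorname{Re}$ of the (nilpotent) exponent near $0$, use the transfer formula
\[
 [z^M](1-z)^{-1-u}=\frac{\Gamma(M+1+u)}{\Gamma(1+u)\Gamma(M+1)}=\frac{M^u}{\Gamma(1+u)}\bigl(1+O(M^{-1})\bigr),
\]
treated as an identity of power series in $u$, with $u=w_1$ nilpotent in each weight quotient. The lower-order terms contribute $O(M^{-1}(1+\log M)^d)$ by the standard Flajolet--Odlyzko transfer for log-powers. This yields
\[
 P_M=M^{w_1}\Gamma(1+w_1)^{-1}f_Y+O\bigl(M^{-1}(1+\log M)^d\bigr).
\]
Comparing with \eqref{product-leading-term} and cancelling the invertible factor $M^{w_1}$ gives $e^{\gamma w_1}C_*=\Gamma(1+w_1)^{-1}f_Y$. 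Therefore $C_*=\Gamma_{\mathrm{reg}}(w_1)^{-1}f_Y$ by \eqref{regularized-Gamma}.

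The final sentence of the statement then follows from \eqref{two-comparisons}, because $T_{\mathrm{alg}}$ is the identity.

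I expect the main obstacle to be the second step, not the analysis. One has to get the side and ordering conventions exactly right: that $(1-z)^{-b}$ sits on the left of $\Phi_{\mathrm{KZ}}$, matching the left-to-right order in $P_M$, and that $\pi$ is compatible with this left factor. One also has to justify that the discarded words ending in $a$ and the correction $K_1(z)-1$ contribute only to the error term. If the orders come out reversed, I would apply the antipode or reverse words, and recheck against the weight-two case $C_*$ at $w_2$, i.e.\ $\zeta(2)$ from $H_2(M)$, together with the $w_1^2$ coefficient.
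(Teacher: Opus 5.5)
Your proof is correct, but it runs in the opposite analytic direction from the paper's. The algebraic core is the same. You have $\sum_{M}P_Mz^M=(1-z)^{-1}\pi(G_0(z))$ and $\pi(b^ku)=w_1^k\pi(u)$, hence $\pi(G_0)=(1-z)^{-w_1}f_Y+\pi\bigl((K_1(z)-1)(1-z)^{-b}\Phi_{\mathrm{KZ}}\bigr)$.

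The paper then argues in the Abelian direction. It takes \eqref{product-leading-term} from the preceding proposition and substitutes it into $(1-z)\sum_MP_Mz^M$. It then needs only two real-variable facts: the limit $\lim_{z\to1^-}(1-z)^{1+u}\sum_MM^uz^M=\Gamma(1+u)$, and $f_Y=\lim_{z\to1^-}(1-z)^{w_1}\pi(G_0(z))$. No analytic continuation beyond the unit disc and no uniformity off the real axis are required.

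You use singularity analysis instead. You extract $P_M=M^{w_1}\Gamma(1+w_1)^{-1}f_Y+O(M^{-1}(1+\log M)^d)$ from the expansion at $z=1$ and match it with \eqref{product-leading-term}. This needs two inputs that you should state explicitly:
\begin{itemize}
\item the multiple polylogarithms continue analytically to $\mathbb C\setminus[1,\infty)$, which contains a $\Delta$-domain;
\item the bound $O(|1-z|\,|\log(1-z)|^k)$ on the error term holds uniformly on the slit disc $|z-1|<1$, $z\notin[1,2)$, where $G_0=G_1\Phi_{\mathrm{KZ}}$ with $K_1$ holomorphic.
\end{itemize}
A bound along $z\to1^-$ alone would not feed the Flajolet--Odlyzko transfer.

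In exchange, your route makes the preceding proposition unnecessary. Since $\widetilde P_M=e^{-H_1(M)w_1}P_M\to e^{-\gamma w_1}\Gamma(1+w_1)^{-1}f_Y$, it proves the existence of $C_*$ and identifies it in a single step. Keeping more terms of $K_1$ would also give a full asymptotic expansion of $P_M$ in powers of $M^{-1}$ and $\log M$. The paper's route is lighter because it reuses the asymptotics already established.
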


\begin{proof}
Put
\[
 L(z)=(1-z)\sum_{M\geq0}P_Mz^M,\qquad 0<z<1.
\]
For every nonempty word, summing first over $M\geq n_1$ gives
\begin{equation}
 [w_{m_1}\cdots w_{m_r}]L(z)
 =\sum_{n_1>\cdots>n_r\geq1}
       \frac{z^{n_1}}{n_1^{m_1}\cdots n_r^{m_r}}.
 \label{Abel-polylogarithms}
\end{equation}
These are the iterated-integral coefficients of $G_0$ at words ending
in $b$.  Thus $L(z)=\pi(G_0(z))$.  The projection commutes with left
multiplication by powers of $b$, read as powers of $w_1$ in $W$.
Consequently, tangential regularization gives
\[
 f_Y=\lim_{z\longrightarrow1^-}(1-z)^{w_1}L(z).
\]
Substitute \eqref{product-leading-term}.  The elementary Abel limit
\[
 \lim_{z\longrightarrow1^-}
 (1-z)^{1+u}\sum_{M\geq1}M^uz^M=\Gamma(1+u)
\]
is locally uniform for $u$ near zero.  It follows by a Riemann-sum
approximation to $\int_0^\infty e^{-t}t^u\,dt$; differentiation at
$u=0$ gives the same assertion for every logarithmic coefficient.
It therefore applies to the nilpotent element $w_1$ in each weight
quotient.  The error in \eqref{product-leading-term} contributes
$O((1-z)(1+|\log(1-z)|)^d)$, for some $d$, after regularization, and
hence tends to zero.  We obtain
\[
 f_Y=\Gamma(1+w_1)e^{\gamma w_1}C_*,
\]
which is \eqref{Gamma-connection}.
\end{proof}

The Gamma factor also records tensor compatibility.  Put
$x=w_1\otimes1$ and $y=1\otimes w_1$.  Equations
\eqref{connection-group-like} and \eqref{Gamma-connection} imply
\begin{equation}
 \frac{\Gamma_{\mathrm{reg}}(x)\Gamma_{\mathrm{reg}}(y)}
      {\Gamma_{\mathrm{reg}}(x+y)}\,
 \Delta_*(f_Y)=f_Y\otimes f_Y.
 \label{Gamma-tensor-defect}
\end{equation}
This is the beta factor in the formulation of the regularized double
shuffle relations used in \cite{Racinet, Terasoma, Markarian}.
Thus multiplication by $\Gamma_{\mathrm{reg}}(w_1)^{-1}$ removes the
tensor defect of the uncorrected comparison.  The Gamma factor itself
is not a tensor automorphism for the harmonic coproduct.

\begin{rem}
The normalization at infinity is part of the construction.  Replacing
$H_1(M)$ by $\log M$ gives
\[
 C_{\log}=\lim_{M\longrightarrow\infty}M^{-w_1}P_M
          =e^{\gamma w_1}C_*.
\]
More generally, multiplication of $\Gamma_{\mathrm{reg}}(u)$ by
$e^{cu}$ leaves the beta factor in \eqref{Gamma-tensor-defect} unchanged.
The condition that its linear coefficient vanish fixes this ambiguity.
\end{rem}

\begin{rem}
The word convention in \eqref{comparison-KZ} must be distinguished from
the matrix realization \eqref{KZ-specialization}.  Put $C=B-A$ and substitute
$a=C$, $b=B$.  The algebra isomorphism
\[
 \mathbb C\oplus\mathbb C\langle\!\langle C,B\rangle\!\rangle B
 \longrightarrow
 \mathbb C\oplus B\mathbb C\langle\!\langle C,B\rangle\!\rangle,
 \qquad c+hB\longmapsto c+Bh
\]
sends $C^{m-1}B$ to $BC^{m-1}$ and preserves the order of the blocks.
It is the left/right transport-algebra identification of
\cite[Section~5]{DeligneTerasoma}.  The KZ residues $(C,B)$ are obtained
from $(A,B)$ by the change of variable $z=1/t$, which exchanges $0$ and
$\infty$.  Accordingly, identifying this comparison with a connection
matrix for the original KZ equation also requires the corresponding
paths and tangential normalizations.
\end{rem}

The two comparisons above are constructed on $\mathcal C_W$ with its
prescribed constant systems.  In particular, $T_{\mathrm{alg}}$ uses
their common trivialization; it is not defined here for arbitrary
rational presentations of difference modules.  A geometric
interpretation requires identifying both fibers and both comparisons
on the perverse-sheaf quotient of \cite{Markarian}.  The Gamma-object
twist in \cite[Definition~7.14 and Proposition~7.15]{DeligneTerasoma},
which converts convolution into a tensor product, provides a related
construction.  We do not establish that geometric identification here.

\bibliographystyle{alpha}
\bibliography{universal_galois.bib}

\end{document}